\DeclareMathOperator{\sph}{sph}
\newcommand{\rParB}[1]{\left[#1\right]}
\theoremstyle{plain}
\newtheorem{Thm}{Theorem}[section]
\newtheorem{Cor}[Thm]{Corollary}
\newtheorem{Lem}[Thm]{Lemma}
\newtheorem{Prop}[Thm]{Proposition}
\newtheorem{Conj}[Thm]{Conjecture}
\theoremstyle{definition}
\newtheorem{Def}[Thm]{Definition}
\theoremstyle{definition}
\newtheorem{Ex}[Thm]{Example}
\theoremstyle{remark}
\newtheorem{Rem}[Thm]{Remark}
\numberwithin{equation}{section}
\renewcommand{\rm}{\normalshape}
\newif\ifShowLabels
\newdimen\theight
\def\TeXref#1{%
	\leavevmode\vadjust{\setbox0=\hbox{{\tt
				\quad\quad  {\small \rm #1}}}%
		\theight=\ht0
		\advance\theight by \lineskip
		\kern -\theight \vbox to
		\theight{\rightline{\rlap{\box0}}%
			\vss}%
}}%
\renewcommand{\sec}[2]{\section{#2}\label{S:#1}%
	\ifShowLabels \TeXref{{S:#1}} \fi}
\newcommand{\ssec}[2]{\subsection{#2}\label{SS:#1}%
	\ifShowLabels \TeXref{{SS:#1}} \fi}
\newcommand{\sssec}[2]{\subsubsection{#2}\label{SSS:#1}%
	\ifShowLabels \TeXref{{SSS:#1}} \fi}
\newenvironment{thm}[1]%
{ \begin{Thm} \label{T:#1}  \ifShowLabels \TeXref{T:#1} \fi }%
	{ \end{Thm} }
\renewcommand{\th}[1]{\begin{thm}{#1} \sl }
	\renewcommand{\eth}{\end{thm} }
\newenvironment{lemma}[1]%
{ \begin{Lem} \label{L:#1}  \ifShowLabels \TeXref{L:#1} \fi }%
	{ \end{Lem} }
\newcommand{\lem}[1]{\begin{lemma}{#1} \sl}
	\newcommand{\elem}{\end{lemma}}
\newenvironment{propos}[1]%
{ \begin{Prop} \label{P:#1}  \ifShowLabels \TeXref{P:#1} \fi }%
	{ \end{Prop} }
\newcommand{\prop}[1]{\begin{propos}{#1}\sl }
	\newcommand{\eprop}{\end{propos}}
\newenvironment{corol}[1]%
{ \begin{Cor} \label{C:#1}  \ifShowLabels \TeXref{C:#1} \fi }%
	{ \end{Cor} }
\newcommand{\cor}[1]{\begin{corol}{#1} \sl }
	\newcommand{\ecor}{\end{corol}}
\newenvironment{defeni}[1]%
{ \begin{Def} \label{D:#1}  \ifShowLabels \TeXref{D:#1} \fi }%
	{ \end{Def} }
\newcommand{\defe}[1]{\begin{defeni}{#1} \sl }
	\newcommand{\edefe}{\end{defeni}}
\newenvironment{remark}[1]%
{ \begin{Rem} \label{R:#1}  \ifShowLabels \TeXref{R:#1} \fi }%
	{ \end{Rem} }
\newcommand{\rem}[1]{\begin{remark}{#1}}
	\newcommand{\erem}{\end{remark}}
\newenvironment{conjec}[1]%
{ \begin{Conj} \label{Co:#1}  \ifShowLabels \TeXref{Co:#1} \fi }%
	{ \end{Conj} }
\renewcommand{\conj}[1]{\begin{conjec}{#1} \sl }
	\newcommand{\econj}{\end{conjec}}
\newenvironment{First proof}[1]%
{ \begin{First proof} \label{Co:#1}  \ifShowLabels \TeXref{Co:#1} \fi }%
	{ \end{First proof} }
\newenvironment{Second proof}[1]%
{ \begin{Second proof} \label{Co:#1}  \ifShowLabels \TeXref{Co:#1} \fi }%
	{ \end{Second proof} }
\newcommand{\eq}[1]%
{ \ifShowLabels \TeXref{E:#1} \fi
	\begin{equation} \label{E:#1} }
\newcommand{\eeq}{ \end{equation} }
\newcommand{\prf}{ \begin{proof} }
	\newcommand{\epr}{ \end{proof} }
\newcommand{\prft}{ \begin{proof} }
	\newcommand{\eprt}{ \end{proof} }
\newcommand\nc{\newcommand}
\nc{\HC}{{\mathcal{HC}}}
\nc{\on}{\operatorname}
\nc{\BA}{{\mathbb{A}}}
\nc{\BC}{{\mathbb{C}}}
\nc{\BF}{{\mathbb{F}}}
\nc{\BG}{{\mathbb{G}}}
\nc{\BM}{{\mathbb{M}}}
\nc{\BN}{{\mathbb{N}}}
\nc{\BO}{{\mathbb{O}}}
\nc{\BQ}{{\mathbb{Q}}}
\nc{\BP}{{\mathbb{P}}}
\nc{\BR}{{\mathbb{R}}}
\nc{\BZ}{{\mathbb{Z}}}
\nc{\BS}{{\mathbb{S}}}
\nc{\CA}{{\mathcal{A}}}
\nc{\CB}{{\mathcal{B}}}
\nc{\CalC}{{\mathcal C}}
\nc{\CalD}{{\mathcal D}}
\nc{\CE}{{\mathcal{E}}}
\nc{\CF}{{\mathcal{F}}}
\nc{\CG}{{\mathcal{G}}}
\nc{\CH}{{\mathcal{H}}}
\nc{\CK}{{\mathcal{K}}}
\nc{\CL}{{\mathcal{L}}}
\nc{\CM}{{\mathcal{M}}}
\nc{\CMM}{{\mathcal{M}^{\operatorname{gen}}_\hbar(-\rho)}}
\nc{\CN}{{\mathcal{N}}}
\nc{\CO}{{\mathcal{O}}}
\nc{\CP}{{\mathcal{P}}}
\nc{\CQ}{{\mathcal{Q}}}
\nc{\CR}{{\mathcal{R}}}
\nc{\CS}{{\mathcal{S}}}
\nc{\CT}{{\mathcal{T}}}
\nc{\CU}{{\mathcal{U}}}
\nc{\CV}{{\mathcal{V}}}
\nc{\CW}{{\mathcal{W}}}
\nc{\CX}{{\mathcal{X}}}
\nc{\CY}{{\mathcal{Y}}}
\nc{\CZ}{{\mathcal{Z}}}
\nc{\gen}{{\operatorname{gen}}}
\nc{\cM}{{\check{\mathcal M}}{}}
\nc{\csM}{{\check{\mathcal A}}{}}
\nc{\obM}{{\overset{\circ}{\mathbf M}}{}}
\nc{\oCA}{{\overset{\circ}{\mathcal A}}{}}
\nc{\obA}{{\overset{\circ}{\mathbf A}}{}}
\nc{\ooM}{{\overset{\circ}{M}}{}}
\nc{\osM}{{\overset{\circ}{\mathsf M}}{}}
\nc{\vM}{{\overset{\bullet}{\mathcal M}}{}}
\nc{\nM}{{\underset{\bullet}{\mathcal M}}{}}
\nc{\obD}{{\overset{\circ}{\mathbf D}}{}}
\nc{\cp}{{\overset{\circ}{\mathbf p}}{}}
\nc{\ofZ}{{\overset{\circ}{\mathfrak Z}}{}}
\nc{\fa}{{\mathfrak{a}}}
\nc{\fb}{{\mathfrak{b}}}
\nc{\fg}{{\mathfrak{g}}}
\nc{\fgl}{{\mathfrak{gl}}}
\nc{\fh}{{\mathfrak{h}}}
\nc{\fj}{{\mathfrak{j}}}
\nc{\fm}{{\mathfrak{m}}}
\nc{\fn}{{\mathfrak{n}}}
\nc{\fu}{{\mathfrak{u}}}
\nc{\fp}{{\mathfrak{p}}}
\nc{\frr}{{\mathfrak{r}}}
\nc{\fs}{{\mathfrak{s}}}
\nc{\ft}{{\mathfrak{t}}}
\nc{\fT}{{\mathfrak{T}}}
\nc{\ofT}{{\overline{\mathfrak T}}}
\nc{\ofS}{{\overline{\mathfrak S}}}
\nc{\fsl}{{\mathfrak{sl}}}
\nc{\hsl}{{\widehat{\mathfrak{sl}}}}
\nc{\hgl}{{\widehat{\mathfrak{gl}}}}
\nc{\hg}{{\widehat{\mathfrak{g}}}}
\nc{\chg}{{\widehat{\mathfrak{g}}}{}^\vee}
\nc{\hn}{{\widehat{\mathfrak{n}}}}
\nc{\chn}{{\widehat{\mathfrak{n}}}{}^\vee}
\nc{\fA}{{\mathfrak{A}}}
\nc{\fB}{{\mathfrak{B}}}
\nc{\fD}{{\mathfrak{D}}}
\nc{\fE}{{\mathfrak{E}}}
\nc{\fF}{{\mathfrak{F}}}
\nc{\fG}{{\mathfrak{G}}}
\nc{\fI}{{\mathfrak{I}}}
\nc{\fJ}{{\mathfrak{J}}}
\nc{\fK}{{\mathfrak{K}}}
\nc{\fL}{{\mathfrak{L}}}
\nc{\fM}{{\mathfrak{M}}}
\nc{\fN}{{\mathfrak{N}}}
\nc{\frP}{{\mathfrak{P}}}
\nc{\fQ}{{\mathfrak Q}}
\nc{\fS}{{\mathfrak S}}
\nc{\fU}{{\mathfrak{U}}}
\nc{\fZ}{{\mathfrak{Z}}}
\nc{\ba}{{\mathbf{a}}}
\nc{\bb}{{\mathbf{b}}}
\nc{\bc}{{\mathbf{c}}}
\nc{\bd}{{\mathbf{d}}}
\nc{\be}{{\mathbf{e}}}
\nc{\bi}{{\mathbf{i}}}
\nc{\bj}{{\mathbf{j}}}
\nc{\bn}{{\mathbf{n}}}
\nc{\bp}{{\mathbf{p}}}
\nc{\br}{{\mathbf{r}}}
\nc{\bq}{{\mathbf{q}}}
\nc{\bu}{{\mathbf{u}}}
\nc{\bv}{{\mathbf{v}}}
\nc{\bx}{{\mathbf{x}}}
\nc{\by}{{\mathbf{y}}}
\nc{\bw}{{\mathbf{w}}}
\nc{\bA}{{\mathbf{A}}}
\nc{\bB}{{\mathbf{B}}}
\nc{\bC}{{\mathbf{C}}}
\nc{\bD}{{\mathbf{D}}}
\nc{\bE}{{\mathbf{E}}}
\nc{\bK}{{\mathbf{K}}}
\nc{\bH}{{\mathbf{H}}}
\nc{\bM}{{\mathbf{M}}}
\nc{\bN}{{\mathbf{N}}}
\nc{\bO}{{\mathbf{O}}}
\nc{\bQ}{{\mathbf Q}}
\nc{\bS}{{\mathbf{S}}}
\nc{\bT}{{\mathbf{T}}}
\nc{\bV}{{\mathbf{V}}}
\nc{\bW}{{\mathbf{W}}}
\nc{\bX}{{\mathbf{X}}}
\nc{\bP}{{\mathbf{P}}}
\nc{\bZ}{{\mathbf{Z}}}
\nc{\sA}{{\mathsf{A}}}
\nc{\sB}{{\mathsf{B}}}
\nc{\sC}{{\mathsf{C}}}
\nc{\sD}{{\mathsf{D}}}
\nc{\sF}{{\mathsf{F}}}
\nc{\sK}{{\mathsf{K}}}
\nc{\sM}{{\mathsf{M}}}
\nc{\sO}{{\mathsf{O}}}
\nc{\sQ}{{\mathsf{Q}}}
\nc{\sP}{{\mathsf{P}}}
\nc{\sV}{{\mathsf{V}}}
\nc{\sW}{{\mathsf{W}}}
\nc{\sZ}{{\mathsf{Z}}}
\nc{\sfp}{{\mathsf{p}}}
\nc{\sr}{{\mathsf{r}}}
\nc{\st}{{\mathsf{t}}}
\nc{\sfb}{{\mathsf{b}}}
\nc{\sfc}{{\mathsf{c}}}
\nc{\sd}{{\mathsf{d}}}
\nc{\sg}{{\mathsf{g}}}
\nc{\sk}{{\mathsf{k}}}
\nc{\sfl}{{\mathsf{l}}}
\nc{\BK}{{\overline{K}}}
\nc{\tA}{{\widetilde{\mathbf{A}}}}
\nc{\tB}{{\widetilde{\mathcal{B}}}}
\nc{\tg}{{\widetilde{\mathfrak{g}}}}
\nc{\tG}{{\widetilde{G}}}
\nc{\TM}{{\widetilde{\mathbb{M}}}{}}
\nc{\tN}{{\widetilde{\mathcal{N}}}{}}
\nc{\tO}{{\widetilde{\mathsf{O}}}{}}
\nc{\tU}{{\widetilde{\mathfrak{U}}}{}}
\nc{\TZ}{{\tilde{Z}}}
\nc{\tZ}{\widetilde{Z}{}}
\nc{\tx}{{\tilde{x}}}
\nc{\tbv}{{\tilde{\bv}}}
\nc{\tfP}{{\widetilde{\mathfrak{P}}}{}}
\nc{\tz}{{\tilde{\zeta}}}
\nc{\tmu}{{\tilde{\mu}}}
\nc{\td}{\ddot{\underline{d}}{}}
\nc{\tzeta}{\widetilde{\zeta}{}}
\nc{\hd}{{\widehat{\underline{d}}}}
\nc{\hG}{{\widehat{G}}}
\nc{\hBP}{\widehat{\mathbb P}{}}
\nc{\hQ}{{\widehat{Q}}}
\nc{\hsM}{\widehat{\mathsf M}{}}
\nc{\hfM}{\widehat{\mathfrak M}{}}
\nc{\hCP}{\widehat{\mathcal P}{}}
\nc{\hCR}{\widehat{\mathcal R}{}}
\nc{\hCS}{{\widehat{\mathcal S}}}
\nc{\hfZ}{\widehat{\mathfrak Z}{}}
\nc{\hZ}{\widehat{Z}{}}
\nc{\urho}{\underline{\rho}}
\nc{\uB}{\underline{B}}
\nc{\uC}{{\underline{\mathbb{C}}}}
\nc{\ui}{\underline{i}}
\nc{\ofP}{{\overline{\mathfrak{P}}}}
\nc{\hrho}{{\hat{\rho}}}
\nc{\unl}{\underline}
\nc{\ol}{\overline}
\nc{\one}{{\mathbf{1}}}
\nc{\two}{{\mathbf{t}}}
\nc{\Sym}{{\mathop{\operatorname{Sym}}}}
\nc{\Tot}{{\mathop{\operatorname{\normalshape Tot}}}}
\nc{\Hilb}{{\mathop{\operatorname{\normalshape Hilb}}}}
\nc{\Hom}{{\mathop{\operatorname{Hom}}}}
\nc{\CHom}{{\mathop{\operatorname{{\mathcal{H}}\it om}}}}
\nc{\defi}{{\mathop{\operatorname{\normalshape def}}}}
\nc{\length}{{\mathop{\operatorname{\normalshape length}}}}
\nc{\Cliff}{{\mathsf{Cliff}}}
\nc{\Fl}{{\mathcal{F}\ell}}
\nc{\Fib}{{\mathsf{Fib}}}
\nc{\Coh}{{\mathsf{Coh}}}
\nc{\FCoh}{{\mathsf{FCoh}}}
\nc{\reg}{{\text{\normalshape reg}}}
\nc{\res}{{\operatorname{res}}}
\nc{\cplus}{{\mathbf{C}_+}}
\nc{\cminus}{{\mathbf{C}_-}}
\nc{\cthree}{{\mathbf{C}_*}}
\nc{\Qbar}{{\overline{Q}}}
\nc{\bh}{{\overline{h}}}
\nc{\bOmega}{{\overline{\Omega}}}
\nc\tGr{\widetilde{\Gr}}
\nc{\seq}[1]{\stackrel{#1}{\sim}}
\nc\ogu{\overline{G/U}}
\nc\chlam{\check{\lam}}
\nc\St{\operatorname{St}}
\nc{\oZ}{{\overset{\circ}{Z}}}
\nc{\tF}{\widetilde{\mathcal F}}
\nc\uS{\underline{S}}
\nc\QM{\mathcal{QM}}
\nc{\chmu}{\check{\mu}}
\newcommand\iso{\,\vphantom{j^{X^2}}\smash{\overset{\sim}{\vphantom{\rule{0pt}{0.20em}}\smash{\longrightarrow}}}\,}
\nc{\ul}{\underline}
\nc{\Mvd}{\mathfrak{M}(\underline{v},\underline{d})}
\nc{\MvdT}{\mathfrak{M}(\underline{v}^{\dagger},\underline{d}^{\dagger})}
\nc{\MVD}{\mathfrak{M}(V,D)}
\nc{\mt}{\mapsto}
\nc{\sm}{\setminus}
\nc{\ra}{\rightarrow}
\nc{\lar}{\leftarrow}
\nc{\hr}{\hookrightarrow}
\nc{\La}{\Lambda}
\nc{\Lap}{\Lambda^{+}}
\nc{\oZal}{\overset{\circ}{Z^{\alpha}}}
\nc{\sig}{\sigma}
\nc{\al}{\alpha}
\nc{\la}{\lambda}
\nc{\is}{\simeq}
\nc{\ip}{\iota^{+}_{\la, \mu}}
\nc{\im}{\iota^{-}_{\la, \mu}}
\nc{\jp}{j^{+}_{\la, \mu}}
\nc{\jm}{j^{-}_{\la, \mu}}
\nc{\pip}{\pi^{+}_{\la, \mu}}
\nc{\pim}{\pi^{-}_{\la, \mu}}
\nc{\s}{\star}
\nc{\fpt}{[A^{\la},B^{\la},\gamma^{\la},\delta^{\la}]}
\nc{\ulfpt}{[\ul{A}^{\la},\ul{B}^{\la},\ul{\gamma}^{\la},\ul{\delta}^{\la}]}
\nc{\lvee}{\!\scriptscriptstyle\vee}
\nc{\Gr}{{\operatorname{Gr}}}
\nc{\rra}{\twoheadrightarrow}
\nc{\End}{\on{End}}
\begin{document}
\author{Pavel Etingof}
\address{Department of Mathematics
Massachusetts Institute of Technology
\newline
77 Massachusetts Avenue,
Cambridge, MA 02139,
USA}
\email{etingof@math.mit.edu}
\author{Vasily Krylov}
\address{
Department of Mathematics
Massachusetts Institute of Technology
\newline
77 Massachusetts Avenue,
Cambridge, MA 02139,
USA;
\newline
Department of Mathematics National Research University Higher School of Economics, \newline 
6 Usacheva st., Moscow, 119048, Russian Federation
}
\email{krvas@mit.edu}	
\author{Ivan Losev}
\address{Department of Mathematics, Yale University 
\newline 12 Hillhouse Avenue, New Haven, CT 06511, USA}
\email{ivan.loseu@yale.edu}
\author{Jos\'e Simental}
\address{Department of Mathematics, University of California Davis \newline  One Shields Avenue, Davis, CA 95616, USA}
\email{jsimental@ucdavis.edu}
	
	%\author{Pavel Etingof, Vasily Krylov, Ivan Losev and Jos\'e Simental}
	\title[Representations of quantized Gieseker varieties]
	{Representations with minimal support  for quantized Gieseker varieties}
	
	\dedicatory{To the memory of Tom Nevins} 
	
	\begin{abstract} We study the minimally supported representations of quantizations of Gieseker moduli spaces. We relate them to $\operatorname{SL}_n$-equivariant D-modules on the nilpotent cone of $\mathfrak{sl}_n$ and to minimally supported representations of type A rational Cherednik algebras. Our main result is character formulas for minimally supported representations of quantized Gieseker moduli spaces. 
	\end{abstract}

	\maketitle
	\section{Introduction}
In this paper we continue the study of the representation theory of quantizations of Gieseker varieties started in~\cite{L}. The main focus of the paper is on the representations with minimal support. We will elaborate on what this means later in this section. We will obtain character formulas for these representations.

\ssec{Not}{Gieseker varieties}	
Pick two vector spaces $V,\,W$ of dimensions $n,\,r \in \BZ_{\geqslant 1}$ respectively. Consider the space $R:=\mathfrak{gl}(V) \oplus \on{Hom}(V,W)$ and a natural action of $\on{GL}(V)$ on it, let $\mathfrak{gl}(V) \ra TR,\, \xi \mapsto \xi_R$ be the infinitesimal action. We can form the cotangent bundle $T^*R$, this is a symplectic vector space. Identifying $\mathfrak{gl}(V)^*$ with $\mathfrak{gl}(V)$ and $\on{Hom}(V,W)^*$ with $\on{Hom}(W,V)$ by means of the trace form we identify $T^*R$ with $\mathfrak{gl}(V)^{\oplus 2} \oplus \on{Hom}(V,W)\oplus \on{Hom}(W,V)$.
	The action of $\on{GL}(V)$ on $T^*R$ is symplectic so we get the moment map $\mu\colon T^*R \ra \mathfrak{gl}(V)$. It can be described in two equivalent ways. First, we have $\mu(A,B,i,j)=[A,B]-ji$. Second, the dual map $\mu^*\colon \mathfrak{gl}(V) \ra \BC[T^*R]$ sends $\xi \in \mathfrak{gl}(V)$ to the vector field $\xi_R$ considered as a polynomial function on $T^*R$. 
	
	We identify the character lattice of $\on{GL}(V)$ with $\BZ$ via the map 
	$$
	\BZ \ni \theta \mapsto (\on{det}^{\theta}\colon \on{GL(V)} \ra \BC^{\times}).
	$$
	Let us pick $\theta \in \BZ \setminus \{0\}$ and consider the open set of $\theta$-stable points $(T^*R)^{\theta-\on{st}} \subset T^*R$. For $\theta>0$ the subset of stable points consists of all quadruples $(A,B,i,j)$ such that $\on{ker}i$ does not contain nonzero $A$- and $B$-stable subspaces. For $\theta<0$ the subset of stable points consists of all quadruples $(A,B,i,j)$ such that $V$ is the unique $A$- and $B$-stable subspace of $V$ containing $\on{im}j$.
	We can form the $\on{GIT}$ Hamiltonian reduction $\mathfrak{M}^\theta(n,r)=T^*R/\!\!/\!\!/^\theta \on{GL}(V):=\mu^{-1}(0)^{\theta-\on{st}}/\on{GL}(V)$. This is a smooth symplectic quasi-projective variety of dimension $2rn$ that is a resolution of singularities of the categorical Hamiltonian reduction $\mathfrak{M}(n,r):=\mu^{-1}(0)/\!\!/\on{GL}(V)$ which is a Poisson variety. 
	We note that $\mathfrak{M}^\theta(n,r)$ and $\mathfrak{M}^{-\theta}(n,r)$ are symplectomorphic via the isomorphism induced by $(A,B,i,j) \mapsto (B^*,-A^*,j^*,-i^*)$, so we always assume $\theta>0$ unless otherwise explicitly stated.
	We also consider the varieties $\overline{\mathfrak{M}}(n,r),\,\overline{\mathfrak{M}}^{\theta}(n,r)$ that are obtained similarly but with the space $R$ replaced by
	$\overline{R}=\mathfrak{sl}(V) \oplus \on{Hom}(V,W)$.
	We have natural isomorphisms $\mathfrak{M}^\theta(n,r)\simeq \BC^2 \times \overline{\mathfrak{M}}^\theta(n,r),\,
	\mathfrak{M}(n,r)\simeq \BC^2 \times \overline{\mathfrak{M}}(n,r)$ and projective morphisms  $\rho\colon \mathfrak{M}^{\theta}(n,r) \ra \mathfrak{M}(n,r),\,\overline{\rho}\colon \overline{\mathfrak{M}}^{\theta}(n,r) \ra \overline{\mathfrak{M}}(n,r)$. The latter morphisms are resolutions of singularities. 

We have an action of $\BC^\times \times \on{GL}(W)$ on $R$ given by  $(z,g) \cdot (A,i)=(zA,gi)$. This action naturally lifts to an action on $T^*R$ and descends to $\mathfrak{M}^\theta(n,r)$ and $\mathfrak{M}(n,r)$.
Let $T_0 \subset \on{GL}(W)$ denote a maximal torus in $\on{GL}(W)$. We set $T:=\BC^\times \times T_0$.

\ssec{Not}{Quantizations of $\mathfrak{M}(n,r)$}
We have a dilation action of $\BC^\times$ on $T^*R$ given by ${t\cdot x=t^{-1}x}$.
It descends to both $\mathfrak{M}^{\theta}(n,r)$ and $\mathfrak{M}(n,r)$. The resulting grading on $\BC[\mathfrak{M}(n,r)]$ is positive meaning that $\BC[\mathfrak{M}(n,r)]=\bigoplus_{i\geqslant 0} \BC[\mathfrak{M}(n,r)]_i$, where $\BC[\mathfrak{M}(n,r)]_i$ is the $i$-th graded component.
	The Poisson bracket on $\BC[\mathfrak{M}(n,r)]$
	has degree $-2$ with respect to this grading.
	By a quantization of $\mathfrak{M}(n,r)$ we mean an associative unital algebra $\CA$ together with an increasing filtration $\CA_i \subset \CA,\, i\in \BZ$ such that $[\CA_i,\CA_j] \subset \CA_{i+j-2}$ for any $i,\,j \in \BZ$ and an isomorphism of graded Poisson algebras $\on{gr} \CA \simeq \BC[\mathfrak{M}(n,r)]$.

	Take $c \in \BC$ and set 
	\begin{equation*}
	    \CA_c(n,r):=D(R)/\!\!/\!\!/_c\on{GL}(V):=(D(R)/[D(R)\{\xi_R-c\on{tr}\xi\,|\, \xi \in \mathfrak{gl}(V)\}])^{\on{GL}(V)},
	\end{equation*}
	where $D(R)$ is the ring of global differential operators on $R$. The algebra $\CA_c(n,r)$ has a filtration that is induced from the Bernstein filtration on $D(R)$, that is, the filtration, where both vector fields and functions on $R$ have degree $1$. There is a natural isomorphism $\BC[\mathfrak{M}(n,r)] \iso \on{gr}\CA_c(n,r)$. We analogously define quantizations $\overline{\CA}_c(n,r)$ of $\overline{\mathfrak{M}}(n,r)$. Note that $\CA_c(n,r)=D(\BC)\otimes \overline{\CA}_c(n,r)$. 
	
\subsection{Main results}	The following theorem is proved in~\cite[Theorem~1.2]{L}.
	\th{Not}\label{rep_fin_gies_param} The algebra $\overline{\CA}_c(n,r)$ has a finite dimensional representation if and only if $c=\frac{m}{n}$ with $\gcd(m,n)=1$ and $c$ is not in the interval $(-r,0)$. If that is the case then there exists a unique simple finite dimensional $\overline{\CA}_c(n,r)$-module to be denoted by $\overline{L}_{\frac{m}{n},r}$.
	\eth

	We remark that proving the existence of a finite-dimensional representation is the most difficult part of the proof of Theorem \ref{rep_fin_gies_param} in \cite{L}. We give an explicit construction of the representation $\overline{L}_{\frac{m}{n}, r}$ which, in particular, simplifies the proof of Theorem \ref{rep_fin_gies_param}. Moreover, our construction allows us to compute not only the dimension of $\overline{L}_{\frac{m}{n}, r}$, but its $\BC^{\times} \times \on{GL}_{r}$-character as well. Let us elaborate on this. We have a natural action of the group $\BC^{\times} \times \on{GL}(W) = \BC^{\times} \times \on{GL}_{r}$ on the vector space $\overline{R}$. This action commutes with the action of $\on{GL}(V)$ and thus induces an action of the group $\BC^{\times} \times \on{PGL}_{r}$ on the Gieseker variety $\overline{\mathfrak{M}}(n, r)$ as well as its resolution $\overline{\mathfrak{M}}^{\theta}(n, r)$. The action ${\BC^{\times} \times \on{PGL}_{r} \curvearrowright \overline{\mathfrak{M}}(n, r)}$ is Hamiltonian, it admits a quantum comoment map ${\Upsilon\colon \BC \oplus \mathfrak{sl}_{r} = \on{Lie}(\BC^{\times} \times \on{PGL}_{r}) \to \overline{\CA}_{c}(n, r)}$ for any value of $c$. It is easy to see that the adjoint action of $\Upsilon(\BC \oplus \mathfrak{sl}_{r})$ on $\overline{\CA}_{c}(n, r)$ is locally finite, and therefore it integrates to an action of $\BC^{\times} \times \on{PGL}_{r}$ on $\overline{\CA}_{c}(n, r)$ by algebra automorphisms. Moreover, via the map $\Upsilon$, every $\overline{\CA}_{c}(n, r)$-module becomes a $q$-graded $\mathfrak{sl}_{r}$-representation. In particular, $\overline{L}_{\frac{m}{n}, r}$ becomes a $\BC^{\times} \times \on{SL}_{r}$-representation. In the next section, we will show that the action of $\on{SL}_{r}$ extends naturally to an action of $\on{GL}_{r}$ and we will compute the $\BC^{\times} \times \on{GL}_{r}$-character of $\overline{L}_{\frac{m}{n},r}$. 
	
	\th{Not}\label{char_fd_th}    Assume $m > 0$ and $\gcd(m,n) = 1$. %The following is true.
	\begin{enumerate}
	\item 
	The $\BC^{\times} \times \on{GL}_{r}$-character of $\overline{L}_{\frac{m}{n}, r}$ is 
	$$
\on{ch}_{\BC^\times \times \on{GL}_r}(\overline{L}_{\frac{m}{n},r})=\frac{1}{[n]_{q}}\sum_{\substack{\lambda \vdash m \\ r(\lambda) \leqslant \min(r;n)}}s_{\lambda}(q^{\frac{1-n}{2}}, \dots, q^{\frac{n-1}{2}})[W_{r}(\lambda)^*],
$$
where $\la$ denotes a Young diagram with $m$ boxes, $r(\la)$ is the number of rows of $\la$, $s_{\la}$ is the Schur function corresponding to $\la$, $W_r(\la)$ is the irreducible $\on{GL}_r$-module corresponding to $\la$, the square brackets denote the class in $K_0(\operatorname{Rep}(\operatorname{GL}_r))$ and $[n]_{q} := (q^{\frac{n}{2}} - q^{-\frac{n}{2}})/(q^{\frac{1}{2}} - q^{-\frac{1}{2}})$. 
	\item The 
	dimension of $\overline{L}_{\frac{m}{n},r}$ equals 
	$\frac{1}{n}{{nr+m-1}\choose{m}}$.
	\end{enumerate}
	\eth

\begin{Ex}\label{ex_sl2}
Consider the case $n=1,\, r=2,\, \theta > 0$. Then we have 
\begin{equation*}
\overline{\mathfrak{M}}(1,2)\iso \CN,\, (i,j) \mapsto (ij),\quad \overline{\mathfrak{M}}^\theta(1,2) \iso T^*(\BP^1),\, (i,j) \mapsto (ij,\on{im}i),
\end{equation*} 
where $\CN \subset \mathfrak{sl}_2$ is the nilpotent cone. Note that the filtered quantizations of $\CN$ are $\CU(\mathfrak{sl}_2)_p:=\CU(\mathfrak{sl}_2)/(C-p(p+2)),\,\, p \in \BC$, where $C:=2ef+2fe+h^2$ is the Casimir element and $\CU(\mathfrak{sl}_2)$ is the universal enveloping algebra of $\mathfrak{sl}_2$. In our notations we have $\overline{\CA}_c(1,2)=\CU(\mathfrak{sl}_2)_{c}$. To see this let us recall that 
\begin{equation*}
\overline{\CA}_c(1,2)=(D(\BC^2)/[D(\BC^2)\{x\frac{\partial}{\partial x}+y\frac{\partial}{\partial y} - c\}])^{\on{GL}_1},
\end{equation*} where $x,\,y \in {\BC^2}^*$ are standard coordinate functions and the action of $\on{GL}_1=\BC^\times$ is given by $t\cdot x=tx,\, t \cdot y=ty$. 
Then the isomorphism 
$\CU(\mathfrak{sl}_2)_c \iso \overline{\CA}_c(1,2)$ is induced by
$e \mapsto -y\frac{\partial}{\partial x},\, f \mapsto -x\frac{\partial}{\partial y},\, h \mapsto y\frac{\partial}{\partial y}-x\frac{\partial}{\partial x}$. This is nothing else but the infinitesimal action of $\mathfrak{sl}_2$ on $\BC^2$ corresponding to the standard action $\on{SL}_2 \curvearrowright \BC^2$. 
The module $\overline{L}_{m,2}$ is exactly $S^m(\BC^2)$ with trivial action of $\BC^\times$, the action of $\on{GL}_{2}$ is induced from the dual of the tautological action $\on{GL}_{2} \curvearrowright \BC^2$.

More generally, for $n=1, \,\theta > 0$ one can identify $\overline{\CA}_c(1,r)$ with a certain quotient of $\CU(\mathfrak{sl}_r)$ and the $\BC^\times \times \on{GL}_r$-module $\overline{L}_{m,r}$ is nothing else but $S^m(\BC^r)$ with trivial action of $\BC^{\times}$ and the action of $\on{GL}_{r}$ induced from the dual of the tautological action on $\BC^r$. 
\end{Ex}

\begin{comment}
Let us also make the comment on the natural partial order induced by $\nu$ on $\overline{\mathfrak{M}}^\theta(1,r)=T^*\BP^{r-1}$ (again $\theta > 0$). Recall that $\nu$ is given by $t \mapsto (t,t^{d_1},\ldots,t^{d_r})$ with $d_1 \gg \ldots \gg d_r$. Recall also that the $\nu(\BC^\times)$-fixed points are parametrized by integers $1,2,\ldots,r$ and after the identification $\overline{\mathfrak{M}}^\theta(1,r)=T^*\BP^{r-1}$ the $i$th fixed point corresponds to 
$(0,[0:0:\ldots:0:1:0:\ldots :0]) \in T^*\BP^{r-1}$. Note now that the action of $\nu(\BC^\times)$ on the zero section $\BP^{r-1} \subset T^*\BP^{r-1}$ is given by $t\cdot [x_1:\ldots:x_r]=[t^{d_1}x_1:\ldots:t^{d_r}x_r]=[t^{d_1-d_r}x_1:\dots:t^{d_{r-1}-d_r}x_{r-1}:x_r]$, so we conclude that the closure of the attractor of $[0:0:\ldots:1]$ is $\BP^{r-1} \subset T^*\BP^{r-1}$ . Similarly it is easy to see that 
$[0:0:\ldots:1]>[0:0:\ldots:0:1]>\ldots>[1:0:\ldots:0]$ so the $r$th fixed point is the greatest one. We conclude that the finite dimensional module should correspond to 
$[0:0:\ldots:1]$ i.e. to the label $(0,0,\ldots,1)$ (since the label of finite dimensional should be the maximal). 

Note also that for $\theta<0$ the same calculations show that the order is $1>2>\ldots >r$ so the finite dimension module should indeed have label $(1,0,\ldots,0)$.
\end{comment}

	One can generalize the theorem above to the case of irreducible representations with {\it minimal support}. Let us explain what we mean by this. 
	
	Fix a  one parameter subgroup $\nu\colon 
	\BC^\times\rightarrow T$, it takes the form $(t^k, \nu_0(t))$, where  $\nu_0\colon\BC^\times \rightarrow T_0$. Assume this one parameter subgroup is generic meaning that its fixed point locus in $\mathfrak{M}^\theta(n,r)$ coincides with that for $T$. We will assume that $k>0$. To this subgroup one can assign the category $\CO_\nu(\CA_c(n,r))$ of certain $\CA_c(n,r)$-modules. If ${c\not\in (-r,0)}$ or has denominator $>n$ (or is irrational),  the irreducible objects in this category are labelled by the $r$-multipartitions of $n$. 
	We can analogously define the category $\CO_\nu(\overline{\CA}_c(n,r))$. 
	Recall that we have an isomorphism $\CA_{c}(n,r) \simeq D(\BC) \otimes \overline{\CA}_c(n,r)$. 
	It is clear that we have a label-preserving equivalence $$\CO_\nu(\overline{\CA}_c(n,r)) \iso \CO_\nu(\CA_c(n,r)), \quad  M \mapsto \BC[x] \otimes M,$$ 
	
	\noindent where $\BC[x]$ is the standard polynomial representation of the Weyl algebra $D(\BC)$, so the computation of the character of a module from $\CO_\nu(\CA_c(n,r))$ boils down to the same computation for the corresponding module in $\CO_\nu(\overline{\CA}_c(n,r))$.
	See Section~\ref{cat_O_supp} for references on categories $\CO$. 
	
	In the special case when $\nu_0(t)=\operatorname{diag}(t^{d_1},\ldots,t^{d_r})$ with $d_1\gg d_2\gg\ldots \gg d_r$, the third named author computed the GK dimensions of the irreducible modules in $\mathcal{O}_\nu(\CA_c(n,r))$, see \cite[Section 6]{L}.
	Assume that $c=\frac{m}{n}$ but $m$ and $n$ are no longer coprime. Let $d := \gcd(m,n)$. The minimal possible GK dimension of a module in $\CO_\nu(\CA_c(n,r))$ is then $d$ and the simple modules with this GK dimension are precisely those labelled by $r$-multipartitions of the form $(\varnothing,\ldots,\varnothing,n_0\lambda)$. Here $n_0:=n/d$ and $\lambda$ is a partition of $d$, by $n_0\lambda$ we mean the partition of $n$ obtained from $\lambda$ multiplying all parts by $n_0$.
	\footnote{
	Note that in~\cite{L} minimally supported modules are labeled by $(n_0\lambda,\varnothing,\ldots,\varnothing)$. The discrepancy here appears because there is a sign mistake in the proof of~\cite[Proposition~3.5]{L} that leads to a reversal of the labeling. We fix the proof of~\cite[Proposition~3.5]{L} in Proposition~\ref{fixed_points}.
	%We should remark that in~\cite{L} minimally supported modules are labeled by $(n_0\lambda,\varnothing,\ldots,\varnothing)$ that could not be true (for $\theta>0$) since for example for $n=1$ we have $\overline{\mathfrak{M}}^\theta(1,r) \simeq T^*(\BP^{r-1})$, and it is easy to see that the label $(\square,\varnothing,\ldots,\varnothing)$ is minimal with respect to the contraction order on ${\overline{\mathfrak{M}}^\theta(1,r)}^{\nu(\BC^\times)}$ induced by $\nu$ so the corresponding simple module must have {\em{maximal}} support. The mistake in~\cite{L} appeared since there is a wrong sign in the proof of~\cite[Proposition~3.5]{L} which leads to reversal of the labeling. We fix a proof of~\cite[Proposition~3.5]{L} in Proposition~\ref{fixed_points}.
	}

	Using quantum Hamiltonian reduction, in Section~\ref{repres_min_supp} we define a certain
	simple module over the algebra $\overline{\CA}_c(n,r)$ to be denoted $\overline{L}_{\frac{m}{n},r}(n_0\la)$.
	We will see that it actually lies in the category $\mathcal{O}$ corresponding to any $\nu$ of the form $(t^k, \nu_0(t))$ for $k>0$ and prove  that $\overline{L}_{\frac{m}{n},r}(n_0\la)$ is labeled by $(\varnothing,\ldots,\varnothing,n_0\la)$. 
    \begin{comment}
	Recall that $L_{\frac{m}{n},r}(n_0\la)=\BC[x] \otimes \overline{L}_{\frac{m}{n},r}(n_0\la)$. 
	\end{comment}
	Recall that $\overline{L}_{\frac{m}{n}, r}(n_0\la)$ is naturally a $q$-graded $\mathfrak{sl}_{r}$-module. We will show that the action of $\mathfrak{sl}_{r}$ on $\overline{L}_{\frac{m}{n}, r}(n_0\la)$ is integrable and induces a natural action $\on{GL}_r \curvearrowright \overline{L}_{\frac{m}{n},r}(n_0\la)$. However, the $q$-grading on $\overline{L}_{\frac{m}{n},r}(n_0\la)$ does not necessarily integrate to a $\BC^\times$-action.
	
	\th{}\label{char_min_supp_th}
The $q$-graded $\on{GL}_r$-character of 
$\overline{L}_{\frac{m}{n},r}(n_0\la)$ is given by the following formula:
\begin{multline*}
\on{ch}_{q,\on{GL}_r}(\overline{L}_{\frac{m}{n},r}(n_0\la))=\\=
(1-q^{-1})\sum\limits_{\substack{\on{r}(\mu)\leqslant \on{min}(n,r)\\\mu,\beta\vdash m}}c^{\beta}_{\la,m_0}q^{-\frac{m-1}{2}+\frac{n}{m}\kappa(\beta)}\langle s_\beta\left[\frac{X}{1-q^{-1}}\right],s_{\mu}\rangle [W_r(\mu)^*],
\end{multline*}
where $\langle\,,\,\rangle$ is the Hall inner product on the algebra of symmetric functions $\La$, we use plethystic notation, $\kappa(\beta)$ is the sum of contents of all boxes of the diagram $\beta$ and the
constants $c^\beta_{\la,m_0}$ are defined as follows: 
$
		 s_{\la}(x_1^{m_0},x_2^{m_0},\ldots)=\sum_\beta c^\beta_{\la,m_0}s_\beta(x_1,x_2,\ldots),
$
where $m_{0} := m/\gcd(m,n)$.
		 \eth
	
	We remark that we have an isomorphism $\overline{\CA}_{c}(n, r) \simeq \overline{\CA}_{-c-r}(n, r)$, see for example \cite[Lemma 3.1]{L}. Thus, we will always assume $c \geqslant 0$ unless otherwise explicitly stated. 

	\subsection{Organization of the paper}
	In Section~\ref{fd_char}, we define an associative algebra $H_c(n,r)$, which is isomorphic to the rational Cherednik algebra of $\mathfrak{sl}_{n}$ for $r=1$. We study the representation theory of the algebra $H_c(n,r)$ and use it to prove Theorem~\ref{char_fd_th}.
	We also give a combinatorial interpretation to the dimension of $\overline{L}_{\frac{m}{n},r}$ in terms of parking functions, see Theorem~\ref{comb_dim}.
	In Section~\ref{loc_simple_prf}, we use Theorem~\ref{char_fd_th} to simplify the proof of the localization theorem for $\mathfrak{M}^\theta(n,r)$ given in~\cite{L}. Section~\ref{repres_min_supp} is devoted to the study of representations of $\CA_c(n,r)$ with minimal support. We construct these representations explicitly as Hamiltonian reductions of certain $D$-modules on $R$. 
	In Section~\ref{char_min_supp}, we prove Theorem~\ref{char_min_supp_th}.
	
	\subsection{Acknowledgements}
	We would like to thank Eugene Gorsky and Monica Vazirani for useful discussions. We are also grateful to the anonymous referee for helpful comments that allowed us to improve the exposition. The work of P.E. was partially supported by the NSF under grant DMS-1502244. The work of V. K. was partially supported by the Foundation for the Advancement of Theoretical Physics and Mathematics ``BASIS".

	\section{Characters of finite-dimensional representations}\label{fd_char}
	In this section, we compute the character of the representation $\overline{L}_{\frac{m}{n}, r}$ by means of a construction similar to that introduced in \cite[Section 9]{e}. This will lead us to study an algebra $H_{c}(n, r)$, explicitly defined by generators and relations, that is very similar to the rational Cherednik algebra that we obtain when setting $r = 1$. Our character computation will follow from our study of the representation theory of this algebra. For this reason, we first review the case of rational Cherednik algebras, which is well-known in the literature.

\subsection{The case $r = 1$}\label{sect:rca} In the $r = 1$ case, the algebra $\overline{\mathcal{A}}_{c}(n, 1)$ is known to be a type $A$ spherical rational Cherednik algebra, cf. \cite{gg, losev_iso}. Let us define the \emph{full} rational Cherednik algebra. The type $A$ rational Cherednik algebra (of $\mathfrak{sl}_{n}$ type) is the algebra $H_{c}(n)$ that is the quotient of the semidirect product algebra ${\BC\langle x_{1}, \dots, x_{n},  y_{1}, \dots, y_{n}\rangle \rtimes S_{n}}$ by the relations
\begin{equation}\label{eqn:rel RCA}
\begin{array}{c}
\sum_{i = 1}^{n} x_{i} = 0, \quad \sum_{i = 1}^{n}y_{i} = 0, \quad 
[x_{i}, x_{j}] = 0, \quad [y_{i}, y_{j}] = 0, \quad
 [x_{i}, y_{j}] = \frac{1}{n} - cs_{ij},
\end{array}
\end{equation}

\noindent where $s_{ij} \in S_{n}$ is the transposition $i \leftrightarrow j$ and, in the last equation, $i \neq j$. Let us remark that $H_{c}$ contains a remarkable Euler element ${\bf{h}} := \frac{1}{2} \sum (x_{i}y_{i} + y_{i}x_{i})$. This element satisfies $[{\bf{h}}, x_{i}] = x_{i}$, $[{\bf{h}}, y_{i}] = -y_{i}$ and $[{\bf{h}}, w] = 0$ for $w \in S_{n}$. In particular, it gives a
grading on $H_{c}$, and every finite-dimensional representation of $H_{c}$ is graded by eigenvalues of ${\bf{h}}$. 

We define a category $\CO_c=\CO(H_c)$ over $H_c$ as the category of finitely generated modules over $H_c$ on which elements $x_i$ act locally nilpotently. Equivalently $\CO_c$ is the category of finitely generated modules $M$ over $H_c$ such that ${\bf{h}}$ acts on $M$ with finite dimensional generalized eigenspaces and real parts of the eigenvalues of $\mathbf{h}$ on $M$ are bounded from {\em{above}}. 

\rem{}\label{rmk:hw vs lw}
Note that this definition is not the standard one (as for example in~\cite{beg, ggor}), where we ask $y_i$ to act locally nilpotently or equivalently real parts of the eigenvalues of ${\bf{h}}$ on $M$ to be bounded from {\em{below}}.
\erem

If $\tau$ is a finite dimensional module over $S_n$, then we can extend it to a module $\widetilde{\tau}$ over $\BC[x_1,\ldots,x_n] \rtimes S_n$ by letting $x_i$ act via $0$ and define the {\em{standard}} $H_c$-module $M_c(\tau)$ as follows: $M_c(\tau):= H_c \otimes_{\BC[x_1,\ldots,x_n] \rtimes S_n} \widetilde{\tau}$. One can easily check that $M_c(\tau) \in \CO_c$. 

The finite-dimensional representations of the algebra $H_{c}(n)$ have been extensively studied from algebraic, combinatorial and geometric points of view, see \cite{beg, GORS, gordon, vasserot}, for example.

\th [\cite{beg, GORS}\label{thm:fd_cherednik}
The algebra $H_{c}(n)$ admits a finite-dimensional representation if and only if $c = \frac{m}{n}$ with $\gcd(m,n) = 1$. If this is the case, there is a unique irreducible finite-dimensional representation, that we will denote $\overline{F}_{\frac{m}{n}}$, and any other finite-dimensional representation is a direct sum of copies of $\overline{F}_{\frac{m}{n}}$. Moreover, if $m > 0$, then the graded decomposition of $\overline{F}_{\frac{m}{n}}$ as an $S_{n}$-module is given by

\begin{equation}\label{eqn:character}
[\overline{F}_{\frac{m}{n}}] = \frac{1}{[m]_{q}}\bigoplus_{\lambda \vdash n} s_{\lambda}(q^{\frac{1 - m}{2}}, q^{\frac{3-m}{2}}, \dots, q^{\frac{m-1}{2}})[V_{\lambda}],
\end{equation}

\noindent where $V_{\lambda}$ is the irreducible $S_{n}$-module labeled by the partition $\lambda$, $s_{\lambda}$ is the corresponding Schur function and our normalization of quantum numbers is $[z]_{q} = \frac{q^{\frac{z}{2}} - q^{-\frac{z}{2}}}{q^{\frac{1}{2}} - q^{-\frac{1}{2}}}$.
\eth

Let us remark that the $q$-number $s_{\lambda}(q^{\frac{1-m}{2}}, \dots, q^{\frac{m-1}{2}})$ can be explicitly computed via the following hook-length formula, see e.g. \cite{reshetikhin}:
$$
s_{\lambda}(q^{\frac{1-m}{2}}, \dots, q^{\frac{m-1}{2}}) = \prod_{(i, j) \in \lambda}\frac{[m + i-j]_{q}}{[h(i,j)]_{q}},
$$
\noindent where $h(i,j)$ is the hook-length of the box $(i,j) \in \lambda$. 
In particular, $s_{\lambda}(q^{\frac{m-1}{2}}, \dots, q^{\frac{1-m}{2}}) = 0$ if the partition $\lambda$ has more than $m$ rows. 

Let us now elaborate on the connection between $H_{c}(n)$ and the algebra $\overline{\mathcal{A}}_{c}(n, 1)$. Note that the algebra $H_{c}(n)$ contains the (trivial) idempotent $\mathbf{e} := \frac{1}{n!}\sum_{w \in S_{n}}w$ of $S_{n}$. So we can form the spherical subalgebra $H_{c}^{\textrm{sph}}(n) := \mathbf{e}H_{c}(n)\mathbf{e}$. According to \cite{gg, losev_iso}, the algebras $H_{c}^{\textrm{sph}}(n)$ and $\overline{\mathcal{A}}_{c}(n,1)$ are isomorphic. Thus, we have 
$$
\overline{L}_{\frac{m}{n}, 1} = \overline{F}_{\frac{m}{n}}^{S_{n}}
$$

\noindent and the $q$-character of $\overline{L}_{\frac{m}{n},1}$ is given by
$$
\frac{1}{[m]_{q}}s_{(n)}(q^{\frac{1-m}{2}}, \dots, q^{\frac{m-1}{2}}) = \frac{1}{[m]_{q}}\left[\!\begin{matrix} n + m - 1 \\ n \end{matrix}\!\right]_{q}.
$$

\subsection{The Calaque-Enriquez-Etingof construction}\label{sect:CEE} Our approach to the computation of the character of the module $\overline{L}_{\frac{m}{n}, r}$ is based on a construction from \cite[Section~9]{e} that gives a construction of certain representations  of type $A$ rational Cherednik algebras via equivariant $D$-modules. Let us denote by $\chi\colon \mathfrak{gl}_{n} \to \BC$ the character $\chi := \frac{m}{n}\on{tr}$. Let $M$ be a $\chi$-twisted equivariant $D$-module on $\mathfrak{sl}_{n}$. 
    Recall that this means that $M$ is a $D$-module on $\mathfrak{sl}_{n}$ with a compatible $\on{GL}_{n}$-action, satisfying $\xi_{R} - \xi_{M} = \chi$ for every $\xi \in \mathfrak{gl}_{n}$.  Note that, since constant matrices act trivially on $\mathfrak{sl}_{n}$, this implies that if $\xi = \on{diag}(a, \dots, a)$, then $\xi_{M} = -am\on{Id}_M$.

It follows that the invariant space $(M \otimes \BC[\Hom(\BC^n, \BC^r)])^{\on{GL}_n}$ coincides with ${(M \otimes S^{m}\Hom(\BC^r,\BC^n))^{\on{GL}_{n}}}$. Moreover, since $\xi_{R} = 0$ for every constant matrix $\xi$ and $M$ is $\chi$-equivariant, $\xi(M \otimes S^m\Hom(\BC^r, \BC^n)) = 0$ for every constant matrix $\xi$ and it follows that 
$$
(M \otimes S^{m}\Hom(\BC^r,\BC^n))^{\on{GL}_n} = (M \otimes S^{m}\Hom(\BC^r, \BC^n))^{\mathfrak{gl}_{n}} = (M \otimes S^{m}\Hom(\BC^r, \BC^n))^{\mathfrak{sl}_{n}}.
$$

\noindent In other words, $(M \otimes S^{m}\Hom(\BC^r, \BC^n))^{\mathfrak{sl}_{n}}$ is a $\overline{\mathcal{A}}_{\frac{m}{n}}(n,r)$-module. Note that here we do not need to assume that $n$ and $m$ are coprime.

It will be convenient to study the larger space $(M \otimes \Hom(\BC^r, \BC^n)^{\otimes m})^{\mathfrak{sl}_{n}}$. To ease the notation, let us set $U := \Hom(\BC^r, \BC^n)$. We also set $F_{n, m, r}(M) := (M \otimes U^{\otimes m})^{\mathfrak{sl}_{n}}$. A priori, $F_{n, m, r}$ is a functor from the category of $\chi$-equivariant $D$-modules on $\mathfrak{sl}_{n}$ to the category of vector spaces, but we will put some extra structure on $F_{n, m, r}(M)$. First, for a matrix $P \in \mathfrak{gl}_{n}$, left multiplication by $P$ defines a map that we will denote $P\colon U \to U$. Moreover, for $i = 1, \dots, m$, we denote $(P)_{i}\colon U^{\otimes m} \to U^{\otimes m}$ the map given by left multiplication by $P$ on the $i$-th tensor factor.

We will also consider a pair of bases $(\rho_{j}),\, (\rho^{j})$ of $\mathfrak{sl}_{n}$ that are dual with respect to the trace form. We will need to make a distinction and consider $\rho_{j} \in \mathfrak{sl}_{n},\, \rho^{j} \in \mathfrak{sl}_n^{*}$. In particular, $\rho^{j}$ is a coordinate function on the space $\mathfrak{sl}_{n}$, while $\rho_{j}$ can be thought of as a differentiation with respect to $\rho^{j}$. We will think of $\rho_{j} \in D(\mathfrak{sl}_{n})$ as a degree $1$ differential operator, and of $\rho^{j} \in D(\mathfrak{sl}_{n})$ as a degree $0$ differential operator. Clearly, $[\rho_{i},\, \rho^{j}] = \delta_{ij}$. 

Finally, for $\ell_{1} \neq \ell_{2}$, let $\mathfrak{c}^{\ell_{1}, \ell_{2}}\colon U^{\otimes m} \to U^{\otimes m}$ denote  the operator that acts as $\sum_{i, j = 1}^{r} E_{ij} \otimes E_{ji}$ on the $\ell_{1}, \ell_{2}$-tensor factors of $U^{\otimes m}$. Here, $E_{ij} \in \End(\BC^r)$ is given by $(E_{ij})_{ab} = \delta_{ia}\delta_{jb}$, and $Q \in \End(\BC^r)$ acts on $U$ by multiplication by $Q^{t}$ on the right. 

\prop{} \label{prop:operators}
For $\ell = 1, \dots, m$, define the following operators on $F_{n, m, r}(M)$:
%$= (M \otimes U_{r}^{\otimes m})^{\mathfrak{sl}_{n}}$
\begin{align*}
X_{\ell} := \sum_{j} \rho^{j} \otimes (\rho_{j})_{\ell}, \qquad Y_{\ell} := \frac{n}{m}\sum_{j} \rho_{j} \otimes (\rho^{j})_{\ell}.
\end{align*}

These operators satisfy the following relations:
\begin{align}
\label{sum0} \sum_{\ell = 1}^{m} X_{\ell} = 0, \quad \sum_{\ell = 1}^{m} Y_{\ell} = 0, \\
\label{comm0} [X_{\ell_{1}}, X_{\ell_{2}}] = 0, \quad [Y_{\ell_{1}}, Y_{\ell_{2}}] = 0, \\
\label{comm} [X_{\ell_{1}}, Y_{\ell_{2}}] = \frac{1}{m} - \frac{n}{m} \mathfrak{c}^{\ell_{1}, \ell_{2}}s_{\ell_{1}, \ell_{2}}, \quad \ell_{1} \neq \ell_{2}, 
\end{align}

\noindent where $s_{\ell_{1}, \ell_{2}}$ is the operator that permutes the $\ell_{1}, \ell_{2}$-tensor factors in $U^{\otimes m}$. 
\eprop
\begin{proof}
A direct computation. Relations~\eqref{sum0} follow from $\mathfrak{sl}_{n}$-invariance. Relations~\eqref{comm0} are obvious. Finally, for~\eqref{comm}, we have
$$
\begin{array}{rl}
\frac{m}{n}[X_{\ell_{1}}, Y_{\ell_{2}}] & =  \sum_{i, j} [\rho^{j} \otimes (\rho_{j})_{\ell_{1}}, \rho_{i} \otimes (\rho^{i})_{\ell_{2}}] \\
 & = \sum_{i, j} [\rho^{j}, \rho_{i}] \otimes (\rho_{j})_{\ell_{1}}(\rho^{i})_{\ell_{2}} \\
 & = \sum_{j}[\rho^{j}, \rho_{j}] \otimes (\rho_{j})_{\ell_{1}}(\rho^{j})_{\ell_{2}} \\
 & = -\sum_{j} 1 \otimes (\rho_{j})_{\ell_{1}}(\rho^{j})_{\ell_{2}}
\end{array}
$$

\noindent and the result follows from the fact that $\sum_{j} (\rho_{j})_{\ell_{1}} (\rho^{j})_{\ell_{2}} = \mathfrak{c}^{\ell_{1}, \ell_{2}}s_{\ell_{1}, \ell_{2}} - \frac{1}{n}\colon U^{\otimes m} \to U^{\otimes m}$, which is straightforward. 
\end{proof}

Note that on $(M \otimes U^{\otimes m})^{\mathfrak{sl}_{n}}$ we also have an action of $\End(\BC^r)^{\otimes m}$ by multiplying on the right by the transpose on the $U^{\otimes m}$ tensor factor, as well as an action of $S_{m}$ by permuting the tensor factors on $U^{\otimes m}$. The action of $\End(\BC^r)^{\otimes m}$ commutes with the action of $X_{1}, \dots, X_{m},\, Y_{1}, \dots, Y_{m}$, and $S_{m}$ satisfies the obvious commutation relations with $X$'s, $Y$'s and $\End(\BC^r)^{\otimes m}$. This motivates the following definition.

\begin{Def}
Let $m, r \in \BZ_{> 0}$ and $c \in \BC$. We define the algebra $H_{c}(m, r)$ to be the quotient of the semi-direct product algebra $$(\BC\langle x_{1}, \dots, x_{m}, y_{1}, \dots, y_{m}\rangle \otimes \End(\BC^{r})^{\otimes m}) \rtimes S_{m}$$ by the relations
\begin{align}
\label{sum} \sum x_{\ell} = 0, \quad \sum y_{\ell} = 0, \\
\label{comm rca0}[x_{\ell_{1}}, x_{\ell_{2}}] = 0, \quad [y_{\ell_{1}}, y_{\ell_{2}}] = 0, \\
\label{comm rca}[x_{\ell_{1}}, y_{\ell_{2}}] = \frac{1}{m} - c \sum_{i,j = 1}^{r} (E_{ij})_{\ell_{1}}(E_{ji})_{\ell_{2}} s_{\ell_{1},\ell_{2}}, \quad \ell_{1} \neq \ell_{2}.
\end{align}
\end{Def}

\begin{Ex}
When $r = 1$, $H_{c}(m, 1)$ is nothing but the rational Cherednik algebra $H_{c}(m)$ defined in Section~\ref{sect:rca}. 
\end{Ex}

Then Proposition~\ref{prop:operators} can be reinterpreted as follows.

\prop{}\label{prop:functor}
The correspondence sending $M$ to $F_{n, m, r}(M)$ defines a functor 
\begin{equation*}
F_{n, m, r}\colon D(\mathfrak{sl}_{n})\on{-mod}^{\on{GL}_{n}, \chi} \to H_{\frac{n}{m}}(m, r)\on{-mod}.
\end{equation*}
\eprop

\subsection{The Dunkl embedding for $H_{c}(m, r)$}\label{sect:dunkl} Proposition~\ref{prop:functor} motivates the study of the algebra $H_{c}(m, r)$ and its representation theory. As it turns out,  the algebras $H_{c}(m, r)$ and $H_{c}(m, 1)$ are Morita equivalent. Before stating our result, let us establish some structural properties of the algebra $H_{c}(m, r)$. In particular, we will define a polynomial representation for $H_{c}(m, r)$. 

Let  $\mathfrak{h}$ denote the $(m-1)$-dimensional reflection representation of the symmetric group $S_{m}$. Recall that the algebra $H_{c}(m, 1)$ acts on $\BC[\mathfrak{h}]$, with $x_{1}, \dots, x_{m}$ acting by multiplication and $y_{1}, \dots, y_{m}$ acting by Dunkl operators. Let us denote these operators on $\BC[\mathfrak{h}]$ by ${\sf{x}}_{1}, \dots, {\sf{x}}_{m}, {\sf{y}}_{1}, \dots, {\sf{y}}_{m}$. 

\prop{}\label{prop:dunkl}
The algebra $H_{c}(m, r)$ acts on the space $\BC[\mathfrak{h}] \otimes (\BC^{r})^{\otimes m}$ as follows:
\begin{itemize}
\item $x_{1}, \dots, x_{m}, y_{1}, \dots, y_{m}$ act by ${\sf{x}}_1 \otimes 1, \dots, {\sf{x}}_m \otimes 1, {\sf{y}}_1 \otimes 1, \dots, {\sf{y}}_m \otimes 1$, respectively.
\item $A_{1} \otimes \cdots \otimes  A_{m} \in \End(\BC^r)^{\otimes m}$ acts by $1 \otimes A_{1} \otimes \cdots \otimes A_{m}$.
\item $S_{m}$ acts diagonally. 
\end{itemize}
\eprop
\begin{proof}
A direct computation, the main point here is that $\sum E_{ij} \otimes E_{ji}$ acts on $\BC^{r} \otimes \BC^{r}$ by switching the tensor factors.
\end{proof}

Let us now denote by $\mathfrak{h}^{\textrm{reg}} \subset \mathfrak{h}$ the locus where the $S_{m}$-action is free, note that this is a principal open set. It is well-known that the algebra $H_{c}(m, 1)$ admits an embedding $H_{c}(m, 1) \hookrightarrow D(\mathfrak{h}^{\textrm{reg}}) \rtimes S_{m}$, by interpreting $x_{1}, \dots, x_{m}$ as functions on $\mathfrak{h}$ and $y_{1}, \dots, y_{m}$ as Dunkl operators. Thanks to Proposition~\ref{prop:dunkl}, the algebra $H_{c}(m, r)$ admits a similar embedding to $(D(\mathfrak{h}^{\textrm{reg}}) \otimes \End(\BC^r)^{\otimes m}) \rtimes S_{m}$. Since the action of ${(D(\mathfrak{h}^{\textrm{reg}}) \otimes \End(\BC^r)^{\otimes m}) \rtimes S_{m}}$ on $\BC[\mathfrak{h}] \otimes (\BC^r)^{\otimes m}$ is faithful, this has the following consequence. 

\prop{}[PBW property for $H_{c}(m, r)$]\label{prop:PBW}
Multiplication induces an isomorphism $S(\mathfrak{h}^{*}) \otimes \left(\End(\BC^{r})^{\otimes m} \rtimes S_{m}\right) \otimes S(\mathfrak{h}) \xrightarrow{\sim} H_{c}(m, r)$. 
\eprop

\cor{}\label{cor:morita}
Let $E := E_{11}^{\otimes m} \in \End(\BC^r)^{\otimes m}$. Then $E$ is an idempotent, $H_{c}(m, r)E H_{c}(m, r) = H_{c}(m, r)$ and $EH_{c}(m, r)E \cong H_{c}(m, 1)$. In particular, $H_{c}(m, r)$ is Morita equivalent to the usual type $A$ rational Cherednik algebra $H_{c}(m)$.
\ecor
\begin{proof}
The first two assertions are clear. For the last assertion, define a map $\BC\langle x_{1}, \dots, x_{m}, y_{1}, \dots, y_{m}\rangle \rtimes S_{m} \to E H_{c}(m, r) E$ by sending $x_{\ell} \mapsto Ex_{\ell}E = x_{\ell}E$, $y_{\ell} \mapsto E y_{\ell}E = y_{\ell}E$ and $w \mapsto EwE = wE$ for $w \in S_{m}$. It is easy to check that this map factors through an algebra homomorphism $H_{c}(m, 1) \to E H_{c}(m, r)E$. That it is an isomorphism follows from the PBW property.   
\end{proof}

\subsection{Representation theory of $H_{c}(m, r)$} Thanks to Corollary~\ref{cor:morita}, the algebras $H_{c}(m, r)$ and $H_{c}(m, 1)$ are Morita equivalent. In fact, we have a very concrete realization of this Morita equivalence, that generalizes Proposition~\ref{prop:dunkl}.

\prop{}\label{prop:morita}
Let $N \in H_{c}(m, 1)\operatorname{-mod}$. Denote the action of $x_{1}, \dots, x_{m},\, y_{1}, \dots, y_{m} \in H_{c}(m, 1)$ on $N$ by ${\sf{x}}_1, \dots {\sf{x}}_m,\, {\sf{y}}_1, \dots {\sf{y}}_m$, respectively. Define $\Phi(N) := N \otimes (\BC^r)^{\otimes m}$. Then $\Phi(N)$ becomes a $H_{c}(m, r)$-module by the same formulas as those in Proposition~\ref{prop:dunkl}, and $\Phi\colon H_{c}(m, 1)\on{-mod} \to H_{c}(m, r)\on{-mod}$ is an inverse to the functor $E\colon H_{c}(m, r)\on{-mod} \to H_{c}(m, 1)\on{-mod}$. 
\begin{comment}
Moreover, up to isomorphism, every $H_{c}(m, r)$-module is of the form $N \otimes (\BC^{r})^{\otimes m}$ for some $N \in H_{c}(m, 1)\operatorname{-mod}$.
\end{comment}
\eprop
\begin{proof}
That the formulas do define an action of $H_{c}(m, r)$ on $\Phi(N)$ is a straightforward direct computation. Note that $E(N \otimes (\BC^r)^{\otimes m}) = N$. So the functor $N \mapsto \Phi(N)$ is a right inverse to the Morita equivalence of Corollary~\ref{cor:morita} and the result follows. 
\end{proof}

Thanks to Theorem~\ref{thm:fd_cherednik} we can see the following. 

\cor{}\label{cor:invariants}
The algebra $H_{c}(m, r)$ admits a finite-dimensional representation if and only if $c = \frac{n}{m}$ with $\gcd(m,n) = 1$. Moreover, the unique irreducible finite-dimensional representation is $\overline{F}_{\frac{n}{m}} \otimes (\BC^{r})^{\otimes m}$ where, recall, $\overline{F}_{\frac{n}{m}}$ is the unique irreducible finite-dimensional representation of $H_{\frac{n}{m}}(m, 1)$.
\ecor

Let us now see that we can get the unique irreducible finite-dimensional representation of $H_{\frac{n}{m}}(m, r)$ from a $D$-module on $\mathfrak{sl}_{n}$ via the functor $F_{n, m, r}$. 

\prop{}\label{prop:morita intertwines}
The equivalence in Corollary~\ref{cor:morita} intertwines the functors $F_{n, m, r}$ and $F_{n, m, 1}$, that is, the following diagram commutes:
$$
\xymatrix{ & D(\mathfrak{sl}_{n})\on{-mod}^{\on{GL}_{n}, \chi} \ar[dl]_{F_{n, m, r}} \ar[dr]^{F_{n, m, 1}} & \\ H_{\frac{n}{m}}(m, r)\on{-mod} \ar[rr]^{M \mapsto EM} & & H_{\frac{n}{m}}(m, 1)\on{-mod} }
$$
\eprop
\begin{proof}
Since $\Hom(\BC^r, \BC^n)E_{11} = \BC^n$, it follows that ${EF_{n, m, r}(M) = (M \otimes (\BC^n)^{\otimes m})^{\mathfrak{sl}_{n}}}$, with the correct formulas for the action of $x_{1}, \dots, x_{m},\, y_{1}, \dots, y_{m}$. 
\end{proof}

\begin{comment}
rem{}\label{rmk:dual}
Let us remark that, since $U = \Hom(\BC^r, \BC^n) = (\BC^r)^{*} \otimes \BC^n$ we have that, in fact, $F_{n, m, r}(M) = F_{n, m, 1}(M) \otimes [(\BC^r)^{*}]^{\otimes m}$, with the action of $\End(\BC^r)^{\otimes m}$ being multiplication by the negative transpose on the right on the $[(\BC^r)^{*}]^{\otimes m}$ tensor factor. Of course, as $\End(\BC^r)$-modules we have $\BC^r \cong (\BC^r)^{*}$ and this induces an isomorphism of $H_{\frac{n}{m}}(m, r)$-modules $F_{n, m, 1}(M) \otimes (\BC^r)^{\otimes m} \cong F_{n, m, 1}(M) \otimes [(\BC^r)^{*}]^{\otimes m}$. Note, however, that the isomorphism $\BC^r \cong (\BC^r)^{*}$ is not of $\mathfrak{sl}_{r}$-modules. This will be important later, see Theorem~\ref{thm:main1}.
\erem
\end{comment}

Now assume $m$ and $n$ are coprime and let $O$ be the regular nilpotent orbit in $\mathfrak{sl}_{n}$. Consider the rank one local system on $O$ that corresponds to the representation of the center $Z(\on{SL}_{n}) \subset \on{SL}_n$ given by $\on{diag}(z, \dots, z) \to z^{-m}$, and let $M(O)$ be its minimal extension. This is an $\on{SL}_{n}$-equivariant $D$-module on $\mathfrak{sl}_{n}$. Extend the $\on{SL}_{n}$-action on $M(O)$ to a $\on{GL}_{n}$-action by requiring a matrix $\on{diag}(a, \dots, a)$ to act by multiplication by $a^{-m}$. This makes $M(O)$ a $\chi$-equivariant $D$-module on $\mathfrak{sl}_{n}$. The next result is now a consequence of Proposition~\ref{prop:morita intertwines} and \cite[Section 9.12]{e}  

\cor{}\label{cor:reg nilpotent}
The module $F_{n, m, r}(M(O))$ is the unique irreducible finite-dimensional representation of $H_{\frac{n}{m}}(m, r)$. In particular, $F_{n, m, r}(M(O)) = \overline{F}_{\frac{n}{m}} \otimes (\BC^r)^{\otimes m}$. 
\ecor

\rem{}\label{rmk:Fourier}
Let us compare the functor $F_{n, m, 1}$ to that used in the work of Calaque-Enriquez-Etingof \cite{e}.  It is easy to see that, if $M$ is a $\chi$-equivariant $D$-module on $\mathfrak{sl}_{n}$ supported on the nilpotent cone $\mathcal{N}$, the action of $x_{1}, \dots, x_{m} \in H_{\frac{n}{m}}(m,r)$ on $F_{n, m, r}(M)$ is locally nilpotent and the action of the Euler element $\mathbf{h} = \frac{1}{2}\sum x_{i}y_{i} + y_{i}x_{i}$ is locally finite. In other words, the module $F_{n, m, r}(M)$ belongs to the category $\mathcal{O}$ of \emph{highest} weight modules.  

In \cite{e}, the authors consider the functor $F^{*}_{n, m, 1} := F_{n, m, 1} \circ \mathcal{F}$, where $\mathcal{F}$ is the usual Fourier transform on $D$-modules. The reason is that, if $M$ is supported on the nilpotent cone $\mathcal{N}$, then the action of $y_{1}, \dots, y_{n}$ on $F^{*}_{n, m, 1}(M)$ is locally nilpotent, so $F^{*}_{n, m, 1}(M)$ belongs to the category of \emph{lowest}-weight modules for $H_{\frac{n}{m}}(m, 1)$, which is more common in the Cherednik algebra literature, see Remark \ref{rmk:hw vs lw} above. Note, however, that since the $D$-module $M(O)$ considered in Corollary \ref{cor:reg nilpotent} is cuspidal, both $M(O)$ and $\mathcal{F}(M(O))$ are supported on the nilpotent cone, and the same reasoning as in \cite[Section 9.12]{e} implies that $F_{n, m, 1}(M(O))$ is a finite-dimensional irreducible representation of $H_{\frac{n}{m}}(m, 1)$. 

\erem

\subsection{Spherical subalgebra} Note that $H_{c}(m,r)$ contains the idempotent $\mathbf{e} := \frac{1}{m!}\sum_{w \in S_{m}} w$, so we have the spherical subalgebra $H_{c}^{\textrm{sph}}(m,r) := \mathbf{e}H_{c}(m, r)\mathbf{e}$. As usual, we have a quotient functor $N \mapsto \mathbf{e}N = N^{S_{m}}$, $H_{c}(m, r)\operatorname{-mod} \to H_{c}^{\textrm{sph}}(m,r)\operatorname{-mod}$ that is an equivalence provided $\mathbf{e}N \neq 0$ for every $N \in H_{c}(m, r)\operatorname{-mod}$. 

\prop{}\label{prop:spherical}
Assume that $c \notin (-1, 0)$ or that $r \geqslant m$. Then the algebras $H_{c}(m, r)$ and $H_{c}^{\textrm{sph}}(m, r)$ are Morita equivalent. In particular, if $c = \frac{n}{m} > 0$ with $\gcd(m,n) = 1$, we have that $H_{c}^{\textrm{sph}}(m, r)$ admits a unique irreducible finite-dimensional representation, given by $\mathbf{e}(\overline{F}_{\frac{n}{m}} \otimes (\BC^r)^{\otimes m}) = (\overline{F}_{\frac{n}{m}} \otimes (\BC^r)^{\otimes m})^{S_{m}}$. 
\eprop
\begin{proof}
The case $c \notin (-1, 0)$ follows from~\cite[Corollary 4.2]{be}, as follows. We need to show that $\mathbf{e}\widetilde{N} = \widetilde{N}^{S_{m}}\not= 0$ for every $\widetilde{N} \in H_{c}(m, r)\on{-mod}$. By Proposition \ref{prop:morita}, $\widetilde{N} = N \otimes (\BC^r)^{\otimes m}$ for some $N \in H_{c}(m, 1)\on{-mod}$. Now, by~\cite[Corollary 4.2]{be}, $N^{S_{m}} \neq 0$ provided $c \not\in (-1, 0)$. Then $0 \neq N^{S^{m}} \otimes S^{m}(\BC^r) \subseteq \widetilde{N}^{S_{m}}$ and we are done. \\
If $r \geqslant m$, by Schur-Weyl duality we have that every irreducible representation of $S_{m}$ appears with nonzero multiplicity in $(\BC^r)^{\otimes m}$. So, using the notation of the previous paragraph, $\widetilde{N}^{S_{m}} = (N \otimes (\BC^r)^{\otimes m})^{S_{m}} \neq 0$ for every nonzero $\widetilde{N} \in H_{c}(m, r)\on{-mod}$ and the result now follows from Proposition~\ref{prop:morita}.
\end{proof}

\subsection{Functor $F^{\sph}_{n, m, r}$ vs. Hamiltonian reduction} Note that we have a functor $F^{\textrm{sph}}_{n, m, r}\colon D(\mathfrak{sl}_{n})\operatorname{-mod}^{\on{GL}_{n}, \chi} \to H_{\frac{n}{m}}^{\textrm{sph}}(m, r)\operatorname{-mod}$ which is defined by $F_{n, m, r}^{\textrm{sph}} := \mathbf{e}F_{n, m, r}$. By the definition of the functor $F_{n, m, r}$, we have that $F^{\textrm{sph}}_{n, m, r}(M) = (M \otimes S^{m}U)^{\mathfrak{sl}_{n}}$. 

On the other hand, we have a Hamiltonian reduction functor 
\begin{equation*}
\mathbb{H}\colon D(\mathfrak{sl}_{n})\operatorname{-mod}^{\on{GL}_{n}, \chi} \to \overline{\mathcal{A}}_{\frac{m}{n}}(n, r)\operatorname{-mod},
\end{equation*} 
given by taking the invariant space $\mathbb{H}(M) := (M \otimes \BC[\Hom(\BC^n, \BC^r)])^{\on{GL}_{n}}$. Thanks to the discussion at the beginning of Section~\ref{sect:CEE} we have that, as vector spaces, ${\mathbb{H}(M) = F^{\textrm{sph}}_{n, m, r}(M)}$ for every $M \in D(\mathfrak{sl}_{n})\operatorname{-mod}^{\on{GL}_{n}, \chi}$.

We claim that even more is true. Note that by Propositions~\ref{prop:morita intertwines} and~\ref{prop:spherical}, another formula for the functor $F^{\textrm{sph}}_{n, m, r}$ is given by $$
F^{\textrm{sph}}_{n, m, r}(M) = (F_{n, m, 1}(M) \otimes (\BC^{r})^{\otimes m})^{S_{m}}
$$

\noindent the space $F_{n, m, 1}(M)$ has a grading coming from the Euler operator in $H_{\frac{n}{m}}(m, 1)$, while the space $(\BC^{r})^{\otimes m}$ has a natural $\operatorname{GL}_{r}$-action. Both the action of the Euler operator and the $\operatorname{GL}_{r}$-action commute with the action of $S_{m}$, so we get commuting $q$-grading and $\operatorname{GL}_{r}$-action on $F^{\textrm{sph}}_{n, m, r}(M)$. 

\rem{}\label{rmk:q-grading}
It follows from~\cite[Section~8]{e} that the action of the Euler element $\mathbf{h}$ on $F_{n,m,1}(M)$ extends to an action of $\mathfrak{sl}_2$ on $F_{n,m,1}(M)$ so if $F_{n,m,1}(M)$ is finite-dimensional, then our $q$-grading integrates to an action of $\BC^\times$ on $F_{n,m,1}(M)$.
\erem

\th{}\label{thm:main1}
Let $M \in D(\mathfrak{sl}_{n})\operatorname{-mod}^{\on{GL}_{n}, \chi}$. Then, as $q$-graded $\mathfrak{sl}_{r}$-modules,
$$
\mathbb{H}(M) = (F_{n, m, 1}(M) \otimes (\BC^{r*})^{\otimes m})^{S_{m}}.
$$

\noindent In particular, the $\mathfrak{sl}_{r}$-action on $\mathbb{H}(M)$ integrates to a $\on{SL}_{r}$-action that can be extended to a $\on{GL}_{r}$-action in a natural way. The $q$-grading integrates to a $\BC^{\times}$-action provided the same is true for the $H_{\frac{n}{m}}(m, 1)$-module $F_{n, m, 1}(M)$. 
\eth
\begin{proof}
Let us deal with the $\mathfrak{sl}_{r}$-action. First, we will consider the action of $\End(\BC^r)^{\otimes m}$. The action of $\End(\BC^r)^{\otimes m}$ on $\mathbb{H}(M) = (M \otimes S^mU)^{\mathfrak{sl}_{n}}$ is induced from the action on $U = \Hom(\BC^r, \BC^n)$, so the embedding $\mathbb{H}(M) \hookrightarrow (M \otimes U^{\otimes m})^{\mathfrak{sl}_{n}} = F_{n, m, r}(M)$ is $\End(\BC^r)^{\otimes r}$-equivariant. Now, the isomorphism $F_{n, m, r}(M) = F_{n, m, 1}(M) \otimes (\BC^r)^{\otimes m}$ is that of $H_{\frac{n}{m}}(m, r)$-modules, and is therefore both $S_{m}$ and $\End(\BC^r)^{\otimes m}$-equivariant. Thus, as $\End(\BC^r)^{\otimes m}$-modules, we have that $\mathbb{H}(M)$ gets identified with the $S_{m}$-invariant part of $F_{n, m, 1}(M) \otimes (\BC^r)^{\otimes m}$. 

Note, however, that the action of $\End(\BC^r)^{\otimes m}$ on $U^{\otimes m}$ is by multiplication by the transpose on the right, and thus it is not compatible with the action of $\mathfrak{sl}_{r}$. To fix this, one needs to first apply the automorphism $\xi \mapsto -\xi$ of $\mathfrak{sl}_{r}$. This induces the antipodal map at the level of the enveloping algebra $\mathcal{U}(\mathfrak{sl}_{r})$ and thus we get ${\mathbb{H}(M) = (F_{n, m, 1} \otimes (\BC^{r*})^{\otimes m})^{S_{m}}}$ as $\mathfrak{sl}_{r}$-modules, as desired. The claim about the integrability of the $\mathfrak{sl}_{r}$-action follows easily. The action of $\on{SL}_{r}$ on $(\BC^{r*})^{\otimes m}$ naturally extends to an action of $\on{GL}_{r}$.

Note that the isomorphism $F_{n, m, r}(M) \cong F_{n, m, 1}(M) \otimes (\BC^{r})^{\otimes m}$ is that of $H_{\frac{n}{m}}(m, r)$-modules and therefore preserves $q$-gradings. So it remains to show that the embedding $\mathbb{H}(M) \hookrightarrow F_{n, m, 1}(M) \otimes (\BC^r)^{\otimes m}$ that we produced in the first paragraph of this proof also preserves the $q$-gradings. The $q$-grading on $\mathbb{H}(M)$ is induced by the action of the operator $\sum_{j}\rho^{j}\rho_{j}$ where, recall, $\rho_{j} \in \mathfrak{sl}_{n}$, $\rho^{j} \in \mathfrak{sl}_{n}^{*}$ are a pair of dual bases. The coincidence of the actions now follows from the formulas in Proposition \ref{prop:operators}, see e.g.~\cite[Proposition 8.7]{e}. Finally, as $q$-graded $S_{m}$-modules we clearly have $F_{n, m, 1} \otimes (\BC^r)^{\otimes m} = F_{n, m, 1} \otimes (\BC^{r*})^{\otimes m}$ and the claim follows. 
\end{proof}

Note that, since we are assuming that $c = \frac{m}{n} \geqslant 0$, we are always in one of the cases considered in Proposition \ref{prop:spherical} and therefore we have that $\mathbb{H}(M) \neq 0$ if and only if $F_{n, m, 1}(M) \neq 0$.

Now let $O$ be the regular nilpotent orbit in $\mathfrak{sl}_{n}$, and $M(O)$ the irreducible $\chi$-equivariant $D$-module on $\mathfrak{sl}_{n}$ associated to $O$. Then $M(O) \otimes \BC[\Hom(\BC^n, \BC^r)]$ is an irreducible $\chi$-equivariant $D(\overline{R})$-module and, since $\mathbb{H}$ is a quotient functor, ${\mathbb{H}(M(O)) = (F_{\frac{n}{m}} \otimes (\BC^{r})^{\otimes m})^{S_{m}} \neq 0}$ is an irreducible $\overline{\mathcal{A}}_{\frac{m}{n}}(n, r)$-module, which is finite-dimensional and therefore isomorphic to $\overline{L}_{\frac{m}{n}, r}$. By Remark \ref{rmk:q-grading} the $q$-grading on $\overline{L}_{\frac{m}{n}, r}$ integrates to a $\BC^{\times}$-action. Then we obtain.

\cor{}\label{cor:L}
As $\BC^\times\times \on{GL}_{r}$-modules,
\begin{equation*}
\overline{L}_{\frac{m}{n}, r} \cong (\overline{F}_{\frac{n}{m}} \otimes (\BC^{r*})^{\otimes m})^{S_{m}}.
\end{equation*}
\ecor

Let us explicitly compute the $\BC^\times \times\on{GL}_{r}$-character of $\overline{L}_{\frac{m}{n}, r}$. From~\eqref{eqn:character} we have that the $\BC^{\times}$-character of $\overline{F}_{\frac{n}{m}}$ is
$$
\frac{1}{[n]_{q}}\bigoplus_{\lambda \vdash m}s_{\lambda}(q^{\frac{1 - n}{2}}, \dots, q^{\frac{n-1}{2}})V_{\lambda}.
$$

\noindent Now, for a partition $\lambda$ with at most $r$ parts, denote by $W_{r}(\lambda)$ the irreducible $\on{GL}_{r}$-summand of $(\BC^r)^{\otimes m}$ indexed by $\lambda$. Then, by Schur-Weyl duality, we can express the character of $\overline{L}_{\frac{m}{n}, r}$ by
$$
\frac{1}{[n]_{q}}\bigoplus_{\substack{\lambda \vdash m \\ r(\lambda) \leqslant r}}s_{\lambda}(q^{\frac{1-n}{2}}, \dots, q^{\frac{n-1}{2}})W_{r}(\lambda)^{*},
$$

\noindent where $r(\lambda)$ is the number of rows of $\lambda$.  Even more is true. The Schur function $s_{\lambda}(q^{\frac{1-n}{2}}, \dots, q^{\frac{n-1}{2}})$ is the graded dimension of the representation $W_{n}(\lambda)$ of $\mathfrak{gl}_{n}$ and therefore vanishes when $r(\lambda) > n$. Then we obtain our character formula
\begin{equation}\label{form_char}
\overline{L}_{\frac{m}{n}, r} = \frac{1}{[n]_{q}} \bigoplus_{\substack{\lambda \vdash m \\ r(\lambda) \leqslant \min(n;r)}}s_{\lambda}(q^{\frac{1-n}{2}}, \dots, q^{\frac{n-1}{2}})W_{r}(\lambda)^{*}.
\end{equation}

Note that, if we ignore the $\frac{1}{[n]_{q}}$-factor in~\eqref{form_char} we have the graded character of the $\on{GL}_{n} \times \on{GL}_{r}$-representation $S^{m}(\BC^{n} \otimes \BC^{r})$, where the grading only affects the $\BC^{n}$-part. It follows that the 
dimension of $\overline{L}_{\frac{m}{n},r}$ is $\frac{1}{n}{nr+m-1\choose{m}}=\frac{1}{n}\on{dim}S^m(\BC^n \otimes \BC^r)$. 

\cor{}\label{cor:symmetric}
The $\BC^{\times}$-character of $\overline{L}_{\frac{m}{n}, r}$ is a Laurent polynomial in $q$ that is symmetric under the change of variables $q \leftrightarrow q^{-1}$, i.e. $\on{ch}_{q}(\overline{L}_{\frac{m}{n}, r}) = \on{ch}_{q^{-1}}(\overline{L}_{\frac{m}{n}, r})$. Moreover, its degree is $(n-1)(m-1)/2$ and its leading coefficient is $\binom{r + m - 1}{m}$.
\ecor
\begin{proof}
It is known that the action of the Euler element $\mathbf{h}$ on $\overline{F}_{\frac{n}{m}}$ extends to an action of $\mathfrak{sl}_{2}$ on $\overline{F}_{\frac{n}{m}}$ and, moreover, this action commutes with the action of $S_{m}$. Thus, the fact that $\on{ch}_{q}(\overline{L}_{\frac{m}{n}, r}) = \on{ch}_{q^{-1}}(\overline{L}_{\frac{m}{n}, r})$ follows immediately from Corollary \ref{cor:L}. From \eqref{form_char} it follows that the degree of $\on{ch}_{q}(\overline{L}_{\frac{m}{n}, r})$ is independent of $r$, and in the case $r = 1$ it is easy to see that the degree is precisely $(m-1)(n-1)/2$. Finally, from \eqref{form_char} it is also easy to see that the leading coefficient is $\dim(S^{m}(\BC^{r*})) = \binom{r + m - 1}{m}$. 
\end{proof}

\begin{Ex}
		Consider now the example $m=3, n=r=2$. We have 
		\begin{equation*}
        \overline{L}_{3/2,2}=	W^{*}_{_{\,\tiny \Yvcentermath1 \yng(2,1)}} + (q+q^{-1})W^{*}_{_{\,\tiny \Yvcentermath1 \yng(3)}},\quad \on{dim}\overline{L}_{3/2,2}=10.
		\end{equation*}
		\end{Ex}

\rem{Comp}\label{zero_dim}
		Note that the roots of $\frac{1}{n}{{nr+m-1}\choose{m}}=\frac{1}{n}{{nr+m-1}\choose{nr-1}}$ considered as a polynomial of $m$ (with fixed $n$) are precisely $-1,\ldots,-nr+1$, i.e. we have $\on{dim}\overline{L}_{\frac{m}{n},r}=0$ for $-rn < m < 0$. This exactly corresponds to the fact that the algebra ${\overline{\CA}_c(n,r)=D(\mathfrak{sl}_n \times (\BC^n)^{\oplus r})/\!\!/\!\!/_{\frac{m}{n}}\on{GL}_n}$ has infinite homological dimension and does not have nontrivial finite dimensional representations in these cases, see~\cite[Theorems~1.1,\,1.2]{L}. We will use these observations in Section~\ref{loc_simple_prf} to greatly simplify the most technical parts of the proof of Theorem 1.1 in loc.cit.
		\erem

\rem{}\label{rmk:generating function}
Let $\{E_{ii}\}$ be the standard basis of $\mathfrak{t}$, i.e. $\on{diag}(a_1,\ldots,a_r)=\sum_i a_iE_{ii}$.
We can define the $T$-character of $\overline{L}_{\frac{n}{m},r}$ as follows:
	    \begin{equation*}
	    g(q,q_1,\ldots,q_r):=\on{Tr}_{\overline{L}_{\frac{n}{m},r}}(q^{{\bf{h}}}q^{E_{11}}_1\ldots q_r^{E_{rr}}).
	    \end{equation*}
It is easy to deduce from the Cauchy identity that $g(q,q_1,\ldots,q_r)$ is the coefficient in front of $z^m$ of the following ``generating function"\footnote{Note that if $m$ and $n$ are not coprime, the coefficient in front of $z^{m}$ in $D(z)$  is not the character of a finite-dimensional representation and, in fact, does not need to be in $\BZ[q, q^{-1}, q_{j}, q_{j}^{-1}\,|\,1 \leqslant j \leqslant r]$. This is the reason why we write ``generating function'' inside quotation marks.}
\begin{equation}\label{eqn:generating function}
D(z)=\frac{1}{[n]_q} \prod_{\substack{1\leqslant i \leqslant n \\ 1 \leqslant j \leqslant r}} \frac{1}{1-zq^{\frac{n+1-2i}{2}}q_j^{-1}}.
\end{equation}
\erem

\ssec{}{Semistandard parking functions and higher rank Catalan numbers} The goal of this section is to give a combinatorial interpretation of $\dim \overline{L}_{\frac{m}{n}, r}$. 

\defe{}
We will call the number $C_{\frac{m}{n}, r} := \dim(\overline{L}_{\frac{m}{n}, r}) = \frac{1}{n}\binom{nr + m - 1}{m}$ the rank $r$ rational $\frac{m}{n}$-Catalan number.
\edefe

Let us, first, review the case $r = 1$ which is well-known. Since $m$ and $n$ are coprime, the number $C_{\frac{m}{n}, 1} = \frac{1}{n} \binom{n + m - 1}{m} = \frac{1}{n+m}\binom{n+m}{m}$ counts the number of \emph{$\frac{m}{n}$-Dyck paths}, that is, paths in $\mathbb{Z}^{2}$ from $(0,0)$ to $(n, m)$ that use only steps in the directions $(1,0)$ and $(0,1)$, and that always stay above the diagonal line $y = \frac{m}{n}x$. We will denote by $\mathcal{D}_{\frac{m}{n}}$ the set of $\frac{m}{n}$-Dyck paths. 

Now let $D \in \mathcal{D}_{\frac{m}{n}}$. A \emph{vertical run} of $D$ is a maximal collection of consecutive vertical steps. Let $a_{1}, \ldots, a_{\ell}$ be the lengths of the vertical runs of $D$. Note that $a_{1} + \ldots + a_{\ell} = m$. The following result will be very important for us.

\begin{lemma}{}\label{lemma:dyck}
As a representation of $S_{m}$,

$$
\overline{F}_{\frac{n}{m}} = \bigoplus_{D \in \mathcal{D}_{\frac{m}{n}}} \on{Ind}_{S_{a_{1}} \times \cdots \times S_{a_{\ell}}}^{S_{m}} \on{triv}.
$$
\end{lemma}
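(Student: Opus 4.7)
The plan is to prove the lemma by computing the Frobenius characteristic of both sides and reducing to a combinatorial identity. First, I would note that the Frobenius image of $\on{Ind}_{S_{a_1} \times \cdots \times S_{a_\ell}}^{S_m} \on{triv}$ equals $h_{a_1} \cdots h_{a_\ell}$, so proving the lemma reduces to the symmetric-function identity
\begin{equation*}
\on{Frob}(\overline{F}_{\frac{n}{m}}) = \sum_{D \in \mathcal{D}_{\frac{m}{n}}} h_{a_1(D)} \cdots h_{a_\ell(D)}.
\end{equation*}

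For the left-hand side, I would apply Theorem~\ref{thm:fd_cherednik} at $q = 1$ to obtain $\on{Frob}(\overline{F}_{\frac{n}{m}}) = \frac{1}{n} \sum_{\lambda \vdash m} \dim W_n(\lambda) \cdot s_\lambda$. The Cauchy identity $\sum_\lambda s_\lambda(x)s_\lambda(y)=\prod_{i,j}(1-x_iy_j)^{-1}$, evaluated with $n$ of the $x$-variables specialized to $1$ and graded by total degree in a formal variable $z$, rewrites the inner sum as $[z^m] H(z)^n$, where $H(z) := \sum_{k \geqslant 0} h_k z^k$. Hence $\on{Frob}(\overline{F}_{\frac{n}{m}}) = \frac{1}{n} [z^m] H(z)^n$.

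For the right-hand side, the key observation is that every $\frac{m}{n}$-Dyck path ends with an $R$-step, since a terminal $U$-step would force the path through $(n, m-1)$, a point strictly below the diagonal. Thus each $D \in \mathcal{D}_{\frac{m}{n}}$ can be written uniquely as $D = U^{b_0} R\, U^{b_1} R \cdots R\, U^{b_{n-1}} R$ for a composition $(b_0, \ldots, b_{n-1})$ of $m$ with nonnegative entries satisfying the Dyck inequalities; the nonzero $b_i$ are precisely the vertical-run lengths $a_j(D)$. Dropping the Dyck condition, the sum $\sum \prod_i h_{b_i}$ over all such compositions equals $[z^m] H(z)^n$, since $h_0 = 1$.

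The final step is to transfer from all compositions to Dyck compositions via the cyclic lemma: $\BZ/n$ acts by rotation on compositions $(b_0, \ldots, b_{n-1})$, the weight $\prod_i h_{b_i}$ is invariant under this action (as the $h_k$ commute and so are symmetric in their indices), and for coprime $(m, n)$ each orbit has exactly $n$ elements and contains exactly one Dyck composition. Averaging over orbits then yields $\sum_D \prod h_{a_i(D)} = \frac{1}{n} [z^m] H(z)^n$, matching the expression for $\on{Frob}(\overline{F}_{\frac{n}{m}})$ above. The main technical point will be the cyclic lemma itself, whose proof hinges on the claim that the quantity $d_j := n(b_0 + \cdots + b_{j-1}) - jm$ attains its minimum over $\{0, 1, \ldots, n-1\}$ at a unique index — this is where coprimality is essential, since an equality $d_k = d_{k'}$ with $0 \leqslant k < k' < n$ would force $n \mid (k' - k) m$ and hence $n \mid k' - k$, a contradiction.
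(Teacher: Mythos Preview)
Your proof is correct and follows essentially the same route as the paper: both compute the Frobenius characteristic of $\overline{F}_{\frac{n}{m}}$ via the character formula from \cite{beg} (here, Theorem~\ref{thm:fd_cherednik}) and match it against the Dyck-path sum. The paper simply cites \cite[Corollary~4]{ALW} for the combinatorial identity $\frac{1}{n}[z^m]H(z)^n = \sum_{D} h_{a_1(D)}\cdots h_{a_\ell(D)}$, whereas you supply a direct cyclic-lemma argument; this is exactly the standard proof underlying that citation, so the two arguments coincide once unpacked.
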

\begin{proof}
Follows from~\cite[Proposition 1.7]{beg} and~\cite[Corollary 4]{ALW}.
\end{proof}

A consequence of Lemma~\ref{lemma:dyck} is that, as $\on{GL}_{r}$-modules,
\begin{equation}\label{eq:comb character}
\begin{array}{rl}
\overline{L}_{\frac{m}{n}, r} & = (\overline{F}_{\frac{n}{m}} \otimes (\BC^{r*})^{\otimes m})^{S_{m}} \\
& = \Hom_{S_{m}}(\overline{F}_{\frac{n}{m}}, (\BC^{r*})^{\otimes m}) \\
 & = \bigoplus_{D \in \mathcal{D}_{\frac{m}{n}}} \Hom_{S_{m}}(\on{Ind}_{S_{a_{1}} \times \cdots \times S_{a_{\ell}}}^{S_{m}}\on{triv}, (\BC^{r*})^{\otimes m}) \\
& = \bigoplus_{D \in \mathcal{D}_{\frac{m}{n}}} \Hom_{S_{a_{1}} \times \cdots \times S_{a_{\ell}}}(\on{triv}, \on{Res}^{S_{m}}_{S_{a_{1}} \times \cdots \times S_{a_{\ell}}}((\BC^{r*})^{\otimes m})) \\
& = \bigoplus_{D \in \mathcal{D}_{\frac{m}{n}}} S^{a_{1}}(\BC^{r*}) \otimes \cdots \otimes S^{a_{\ell}}(\BC^{r*}).
\end{array}
\end{equation}

\rem{}
Note that \eqref{eq:comb character} gives another expression for the $\on{GL}_{r}$-character of $\overline{L}_{\frac{m}{n}, r}$. 
\erem

We will use equation \eqref{eq:comb character} to give a combinatorial interpretation of $C_{\frac{m}{n}, r}$. The key concept is the following. 

\defe{}\label{def:parking}
A rank $r$ semistandard $\frac{m}{n}$-parking function consists of a pair $(D, \varphi)$, where $D$ is an $\frac{m}{n}$-Dyck path, $D \in \mathcal{D}_{\frac{m}{n}}$ and $\varphi\colon \{\text{vertical steps of} \; D\} \to \{1, \dots, r\}$ is a function that is weakly increasing along each vertical run, reading from top-to-bottom. We will denote by $\mathcal{PF}^{r}_{\frac{m}{n}}$ the set of rank $r$ semistandard $\frac{m}{n}$-parking functions. 
\edefe

\rem{}
Recall that an $\frac{m}{n}$-parking function consists of an $\frac{m}{n}$-Dyck path together with a bijection from its set of vertical steps to $\{1, \dots, m\}$ that is strictly increasing along each vertical run. This explains the terminology in Definition~\ref{def:parking}.
\erem

\th{}\label{comb_dim}
Assume $m$ and $n$ are coprime. Then $|\mathcal{PF}^{r}_{\frac{m}{n}}| = C_{\frac{m}{n}, r}$. 
\eth
\begin{proof}
It is straightforward to see that the number of ways to label the vertical steps of a Dyck path $D \in \mathcal{D}_{\frac{m}{n}}$ to make a semistandard parking function of rank $r$ is ${\binom{r + a_{1} - 1}{a_{1}} \times \cdots \times \binom{r + a_{\ell} - 1}{a_{\ell}}}$, where $a_{1}, \dots, a_{\ell}$ are the lengths of the vertical runs of $D$. The result now follows from~\eqref{eq:comb character}.
\end{proof}

\begin{Ex}
Let us consider the example $m = 3$, $n = r = 2$. There are $C_{\frac{3}{2},2} = 10$ $\frac{3}{2}$-semistandard parking functions of rank $2$, given in Figure~\ref{figure:ssparking}.
\begin{figure}[h]
\begin{tikzpicture}[scale=0.5]

\draw(-4,0)--(-2,0)--(-2,3)--(-4,3)--cycle;
\draw(-4,1)--(-2,1);
\draw(-4,2)--(-2,2);
\draw(-3,0)--(-3,3);
\draw[dashed, color=red](-4,0)--(-2,3);
\draw[line width = 2](-4,0)--(-4,3)--(-2,3);
\node at (-4.3, 0.5) {1};
\node at (-4.3, 1.5) {1};
\node at (-4.3, 2.5) {1};

\draw(0,0)--(2,0)--(2,3)--(0,3)--cycle;
\draw(0,1)--(2,1);
\draw(0,2)--(2,2);
\draw(1,0)--(1,3);
\draw[dashed, color=red](0,0)--(2,3);
\draw[line width = 2](0,0)--(0,3)--(2,3);
\node at (-0.3, 0.5) {2};
\node at (-0.3, 1.5) {1};
\node at (-0.3, 2.5) {1};

\draw(4,0)--(6,0)--(6,3)--(4,3)--cycle;
\draw(4,1)--(6,1);
\draw(4,2)--(6,2);
\draw(5,0)--(5,3);
\draw[dashed, color=red](4,0)--(6,3);
\draw[line width = 2](4,0)--(4,3)--(6,3);
\node at (3.7, 0.5) {2};
\node at (3.7, 1.5) {2};
\node at (3.7, 2.5) {1};

\draw(8,0)--(10,0)--(10,3)--(8,3)--cycle;
\draw(8,1)--(10,1);
\draw(8,2)--(10,2);
\draw(9,0)--(9,3);
\draw[dashed, color=red](8,0)--(10,3);
\draw[line width = 2](8,0)--(8,3)--(10,3);
\node at (7.7, 0.5) {2};
\node at (7.7, 1.5) {2};
\node at (7.7, 2.5) {2};

\draw(-8, -4)--(-6, -4) -- (-6, -1) -- (-8, -1) -- cycle;
\draw(-8, -3) -- (-6, -3);
\draw(-8, -2) -- (-6, -2);
\draw(-7, -4) -- (-7, -1);
\draw[dashed, color=red](-8, -4)--(-6, -1);
\draw[line width = 2](-8, -4)--(-8, -2) -- (-7, -2) -- (-7, -1) -- (-6, -1);
\node at (-8.3, -3.5) {1};
\node at (-8.3, -2.5) {1};
\node at (-8.3, -1.5) {1};

\draw(-4, -4)--(-2, -4) -- (-2, -1) -- (-4, -1) -- cycle;
\draw(-4, -3) -- (-2, -3);
\draw(-4, -2) -- (-2, -2);
\draw(-3, -4) -- (-3, -1);
\draw[dashed, color=red](-4, -4)--(-2, -1);
\draw[line width = 2](-4, -4)--(-4, -2) -- (-3, -2) -- (-3, -1) -- (-2, -1);
\node at (-4.3, -3.5) {1};
\node at (-4.3, -2.5) {1};
\node at (-4.3, -1.5) {2};

\draw(0, -4)--(2, -4) -- (2, -1) -- (0, -1) -- cycle;
\draw(0, -3) -- (2, -3);
\draw(0, -2) -- (2, -2);
\draw(1, -4) -- (1, -1);
\draw[dashed, color=red](0, -4)--(2, -1);
\draw[line width = 2](0, -4)--(0, -2) -- (1, -2) -- (1, -1) -- (2, -1);
\node at (-0.3, -3.5) {2};
\node at (-0.3, -2.5) {1};
\node at (-0.3, -1.5) {1};

\draw(4, -4)--(6, -4) -- (6, -1) -- (4, -1) -- cycle;
\draw(4, -3) -- (6, -3);
\draw(4, -2) -- (6, -2);
\draw(5, -4) -- (5, -1);
\draw[dashed, color=red](4, -4)--(6, -1);
\draw[line width = 2](4, -4)--(4, -2) -- (5, -2) -- (5, -1) -- (6, -1);
\node at (3.7, -3.5) {2};
\node at (3.7, -2.5) {1};
\node at (3.7, -1.5) {2};

\draw(8, -4)--(10, -4) -- (10, -1) -- (8, -1) -- cycle;
\draw(8, -3) -- (10, -3);
\draw(8, -2) -- (10, -2);
\draw(9, -4) -- (9, -1);
\draw[dashed, color=red](8, -4)--(10, -1);
\draw[line width = 2](8, -4)--(8, -2) -- (9, -2) -- (9, -1) -- (10, -1);
\node at (7.7, -3.5) {2};
\node at (7.7, -2.5) {2};
\node at (7.7, -1.5) {1};

\draw(12, -4)--(14, -4) -- (14, -1) -- (12, -1) -- cycle;
\draw(12, -3) -- (14, -3);
\draw(12, -2) -- (14, -2);
\draw(13, -4) -- (13, -1);
\draw[dashed, color=red](12, -4)--(14, -1);
\draw[line width = 2](12, -4)--(12, -2) -- (13, -2) -- (13, -1) -- (14, -1);
\node at (11.7, -3.5) {2};
\node at (11.7, -2.5) {2};
\node at (11.7, -1.5) {2};
\end{tikzpicture}
\caption{The $\frac{3}{2}$-semistandard parking functions of rank $2$. Note that, if $T_{0}$ denotes a maximal torus of $\on{GL}_{2}$, then the $T_{0}$-character of $L_{\frac{3}{2}, 2}$ is given by $2q_{1}^{-3} + 3q_{1}^{-2}q_{2}^{-1} + 3q_{1}^{-1}q_{2}^{-2} + 2q_{2}^{-3}$}
\label{figure:ssparking}
\end{figure}
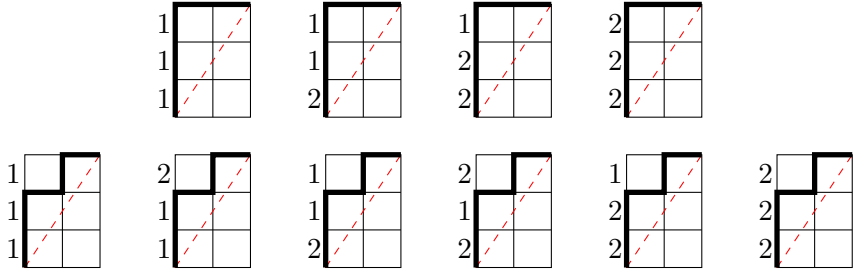
\end{Ex}

\rem{}\label{rmk:T0 from parking}
Recall that $T_{0} \subseteq \on{GL}_{r}$ denotes a maximal torus. It follows from~\eqref{eq:comb character} that the $T_{0}$-character of $\overline{L}_{\frac{m}{n}, r}$ is given by
$$
\sum_{(D, \varphi) \in \mathcal{PF}^{r}_{\frac{m}{n}}} \prod_{i = 1}^{r}q_{i}^{-|\varphi^{-1}(i)|},
$$

\noindent see also Remark \ref{rmk:generating function} above.
\erem

\rem{}
It is an interesting question to find $|\mathcal{PF}^{r}_{\frac{m}{n}}|$ in the non-coprime case. It is possible that a Bizley-like formula exists for the generating function of $|\mathcal{PF}^{r}_{\frac{dm}{dn}}|$, but we will not pursue it here.
\erem

\ssec{}{$q$-analogues of $C_{\frac{m}{n}, r}$} Let us compute $\on{ch}_{\BC^{\times} \times \on{GL}_{r}}\overline{L}_{\frac{m}{n}, r}$ in a couple of easy examples. 

\begin{Ex}
Let us consider the case $n = 3,\, m = 2$. Then
$$
\on{ch}_{\BC^{\times} \times \on{GL}_{r}}(\overline{L}_{\frac{2}{3}, r}) = (q+q^{-1})[S^{2}(\BC^{r*})] + [\Lambda^2(\BC^{r*})], 
$$

\noindent on the other hand, if $n = 2,\, m = 3$ we have
$$
\on{ch}_{\BC^{\times} \times \on{GL}_{r}}(\overline{L}_{\frac{3}{2}, r}) = (q + q^{-1})[S^{3}(\BC^{r*})] + [W_{r}(2,1)^{*}].
$$
\end{Ex}

Note that the roots of $\on{ch}_{\BC^{\times}}(\overline{L}_{\frac{2}{3}, r}) = \on{Tr}_{\overline{L}_{\frac{m}{n}, r}}(q^{\mathbf{h}}) =  \frac{r}{2}((r+1)q + (r-1) + (r+1)q^{-1})$ are roots of unity if and only if $r = 1$. It follows, in particular, that $\on{ch}_{\BC^{\times}}(\overline{L}_{\frac{m}{n}, r})$ does not admit an expression involving only products and quotients of $q$-numbers.

To remedy this, we propose an alternative evaluation of the trace that yields an expression that does factor. In fact, we have several of them, one for each divisor $d$ of $r$. Let us adopt the notation of Remark \ref{rmk:generating function}, in particular, ${g(q, q_{1}, \dots, q_{r}) := \on{Tr}_{\overline{L}_{\frac{m}{n}, r}}(q^{\mathbf{h}}q_{1}^{E_{11}}\cdots q_{r}^{E_{rr}})}$. Let $d$ be a divisor of $r$, and set $k := r/d$. We consider the expression
$$
\frac{[nr]_{\bold{q}}}{[n]_{q}} = [k]_{q^{n}}[d]_{\bold{q}},
$$

\noindent where we set $\bold{q} := q^{\frac{1}{d}}$. Clearly, this is a Laurent polynomial in $\bold{q}$ with non-negative integer coefficients. Now let $N_{d} \in T_{0} \subseteq \on{SL}_{r}$ be 
$$
N_{d} = \on{diag}(q_{1}, \dots, q_{r}),
$$

\noindent where $\{q_{1}, \dots, q_{r}\}$ $= \{\bold{q}^{\frac{dn(k+1 - 2\ell) + d + 1 - 2s}{2}} \,|\, 1 \leqslant \ell \leqslant k, \, 1 \leqslant s \leqslant d\}.$ Note that we have ${\on{Tr}(N_{d}) = [k]_{q^{n}}[d]_{\bold{q}}}$. We define
%Now let $N_{d} \in T_{0} \subseteq \on{SL}_{r}$ be such that it evaluates its trace to $[k]_{q^{n}}[d]_{q^{1/d}}$, and we define
$$
\on{ch}^{d}_{q}(\overline{L}_{\frac{m}{n}, r}) := \on{Tr}_{\overline{L}_{\frac{m}{n}, r}}(q^{\mathbf{h}}N_{d}).
$$

\noindent Equivalently,
$\on{ch}^{d}_{q}(\overline{L}_{\frac{m}{n}, r}) = g(q, q_{1}, \dots, q_{r})$.

\prop{}\label{prop:qdimension}
We have
$$
\on{ch}_{q}^{d}(\overline{L}_{\frac{m}{n}, r}) = \frac{[d]_{\bold{q}}}{[nd]_{\bold{q}}}
\rParB{\begin{matrix}nr + m - 1 \\ m  \end{matrix}}_{\bold{q}}.
$$
\eprop
\begin{proof}
Note that, by \eqref{form_char}, $\on{ch}_{q}^{d}(\overline{L}_{\frac{m}{n}, r})$ is 
$$
\on{ch}_{q}^{d}(\overline{L}_{\frac{m}{n}, r}) = \frac{1}{[n]_{q}}\on{Tr}(S^{m}(q^{\rho_{n}} \otimes N_{d}^{-1})),
$$

\noindent where $q^{\rho_{n}} = \on{diag}(q^{\frac{-n+1}{2}}, q^{\frac{-n+3}{2}}, \dots, q^{\frac{n-1}{2}})$. The matrix $q^{\rho_{n}}\otimes N_{d}^{-1}$ is diagonal:
$$
q^{\rho_{n}} \otimes N_{d}^{-1} = \on{diag}(q^{\frac{n + 1 - 2i}{2}}(\bold{q}^{-1})^{\frac{dn(k + 1 - 2\ell) + d + 1 - 2s}{2}}),
$$

\noindent where $1 \leqslant \ell \leqslant k$, $1 \leqslant s \leqslant d$ and $1 \leqslant i \leqslant n$. It is easy to see that, up to permuting the diagonal entries, this can be simplified as
$$
q^{\rho_{n}} \otimes N_{d}^{-1} = \on{diag}(\bold{q}^{\frac{-nr + 1}{2}}, \bold{q}^{\frac{-nr + 3}{2}}, \dots, \bold{q}^{\frac{nr - 1}{2}}) = \bold{q}^{\rho_{nr}}
$$

\noindent and it is well-known, and easy to show, that
$$
\on{Tr}S^{m}(\bold{q}^{\rho_{nr}}) =\rParB{\begin{matrix}nr + m - 1 \\ m  \end{matrix}}_{\bold{q}},
$$

\noindent the result now follows from the observation that $[n]_{q} = [nd]_{\bold{q}}/[d]_{\bold{q}}$.
\end{proof}

\begin{comment}

This is a consequence of \eqref{eqn:generating function}, as follows. First, we recall that $\left[\begin{matrix} a + b \\ b \end{matrix}\right]_{q}$ is the coefficient in front of $z^{b}$ in the generating function
$$
\prod_{0 \leqslant i \leqslant a} \frac{1}{1 - zq^{\frac{a - 2i}{2}}}.
$$

Now, by definition $\on{ch}_{q}^{d}(\overline{L}_{\frac{m}{n}, r})$ is the coefficient in front of $z^{m}$ in the  ``generating function''

$$
\frac{1}{[n]_{q}} \prod_{\substack{1 \leqslant \ell \leqslant k \\ 1 \leqslant s \leqslant d\\ 1 \leqslant i \leqslant n}}\frac{1}{1 - z(q^{\frac{n + 1 - 2i}{2}})(q^{-1/d})^{\frac{dn(k + 1 - 2\ell) + d + 1 - 2s}{2}}}
$$

\noindent that we can rewrite as 

\begin{equation}\label{eqn:genseries}
\frac{1}{[n]_{q}} \prod_{\substack{1 \leqslant \ell \leqslant k \\ 1 \leqslant s \leqslant d\\ 1 \leqslant i \leqslant n}} \frac{1}{1 - z(q^{1/d})^{\frac{-nr - 1 + 2(-id + \ell d n + s)}{2}}}
\end{equation}

\noindent It is easy to see that $\{-id + \ell d n + s : \ell = 1, \dots, k, \quad s = 1, \dots, d, \quad i = 1, \dots, n\}$ is precisely the set $\{1, 2, \dots, nr\}$. So we can rewrite \eqref{eqn:genseries} as 

$$
\frac{1}{[n]_{q}}\prod_{i = 1}^{rn}\frac{1}{1 - z(q^{1/d})^{\frac{-nr - 1 + 2i}{2}}} = \frac{1}{[n]_{q}}\prod_{i = 0}^{rn - 1}\frac{1}{1 - z(q^{1/d})^{\frac{rn - 1 - 2i}{2}}},
$$

\noindent and the result now follows from the observation that $[n]_{q} = [nd]_{q^{1/d}}/[d]_{q^{1/d}}$. 
\end{comment}

Thanks to Proposition \ref{prop:qdimension} if $d$ is a divisor of $r$ the $q$-number
$$
C^{d}_{\frac{m}{n}, r}(q) := \frac{[d]_{q}}{[nd]_{q}}\left[\begin{matrix}nr + m - 1 \\ m \end{matrix}\right]_{q}
$$

\noindent is a Laurent polynomial in $q$ with non-negative integer coefficients. Clearly, when $q = 1$ we recover the rank $r$ Catalan number $C_{\frac{m}{n}, r}$. When $r = 1 = d$, the $q$-number $C_{\frac{m}{n}, 1}^{1}(q)$ coincides with the usual $q$-Catalan number, that is the generating function for the $\on{area} - \on{dinv}$ statistic on the set of $\frac{m}{n}$-Dyck paths. We do not know the combinatorial meaning of $C^{d}_{\frac{m}{n}, r}(q)$ for $r > 1$.

\rem{}
More generally, it would be interesting to find statistics on the set of $\frac{m}{n}$-semistandard parking functions of rank $r$ whose generating function is $\on{ch}_{\BC^{\times} \times \on{GL}_{r}}(\overline{L}_{\frac{m}{n}, r}) \in \BZ_{\geq 0}[q, q^{-1}, q_{1}, q_{1}^{-1}, \dots, q_{r}, q_{r}^{-1}]$. The statistics corresponding to $q_{1}, \dots, q_{r}$ are not hard, see Remark \ref{rmk:T0 from parking} above. One possible way to find a statistic corresponding to $q$ is to introduce analogues of sweep maps for semistandard parking functions, see Section 6.1 in \cite{ALW}. We plan to come back to this in future work. 
\erem

		\sec{Loc}{Localization}\label{loc_simple_prf}
		\ssec{Loc}{Quantizations of $\mathfrak{M}^\theta(n,r)$ and localization}\label{quant_of_gies}
		Let us now consider quantizations $\CA^{\theta}_c(n,r)$ of $\mathfrak{M}^\theta(n,r)$ which are sheaves in conical topology of filtered algebras on $\mathfrak{M}^\theta(n,r)$ with $\on{gr}\CA^{\theta}_c(n,r) \simeq \CO_{\mathfrak{M}^\theta(n,r)}$. These quantizations are defined via
	\begin{equation*}
	    \CA_c^\theta(n,r)=D_R/\!\!/\!\!/^\theta_c \on{GL}(V):=p_*(D_R/D_R\{\xi_R - c \on{tr}(\xi)\,|\, \xi \in \mathfrak{gl}_n\}|_{\mu^{-1}(0)^{\theta\text{-}\on{st}}})^{\on{GL}_n},
	\end{equation*}
where $D_R$ is the sheaf (in conical topology) of microlocal differential operators on $T^{*}R$ and $p\colon \mu^{-1}(0)^{\theta\text{-}\on{st}} \twoheadrightarrow \mathfrak{M}^\theta(n,r)$ is the quotient morphism. 

\rem{}
Note that one can also define $\CA^\theta_c(n,r)$ in the following way. We need to define its sections over principal open subsets $(T^*R)_f/\!\!/\!\!/^\theta \on{GL}_n \subset \mathfrak{M}^\theta(n,r)$ where $f\in \BC[T^*R]$ is a $\BC^\times$-homogeneous $\theta$-semiinvariant function of degree $\geqslant 1$. Let $\on{R}_{\hbar} := \on{R}_{\hbar}(D(R))$ be the Rees algebra of $D(R)$, so that $\on{R}_{\hbar}/(\hbar) = \BC[T^{*}R]$. For $k > 0$, let $S_{k} \subseteq \on{R}_{\hbar}/(\hbar^{k})$ be the preimage of the set $\{f^{m}\,|\, m \geqslant 0\}$ with respect to the natural morphism $\on{R}_{\hbar}/(\hbar^{k}) \twoheadrightarrow \on{R}_{\hbar}/(\hbar)$. It is easy to see that $S_{k}$ is an Ore set in $\on{R}_{\hbar}/(\hbar^{k})$ and that we can form the inverse limit 

$$
\widehat{\on{R}_{\hbar}}[f^{-1}] := \underset{\longleftarrow}{\on{lim}} \, \left(\on{R}_{\hbar}/(\hbar^{k})\right)[S_{k}^{-1}].
$$

\noindent   We consider the subalgebra of locally finite vectors with respect to the natural $\BC^{\times}$-action that we denote by $\widehat{\on{R}_{\hbar}}[f^{-1}]_{\on{l.f.}}$, and we define

$$D(R)[f^{-1}] := \widehat{\on{R}_{\hbar}}[f^{-1}]_{\on{l.f.}}/(\hbar - 1).$$

\noindent It is straightforward to see that $D(R)[f^{-1}]$ is a filtered quantization of $\BC[T^{*}R][f^{-1}]$. Since $f$ is $\theta$-semiinvariant we still have an action of $\operatorname{GL}_{n}$ on $D(R)[f^{-1}]$ and a quantum comoment map $\mathfrak{gl}_{n} \to D(R)[f^{-1}]$. The sections of $\CA^{\theta}_{c}(n, r)$ on $(T^*R)_f/\!\!/\!\!/^\theta \on{GL}_n$ are then the quantum Hamiltonian reduction $D(R)[f^{-1}]/\!\!/\!\!/_c \on{GL}_n$. All of this follows from the construction of the sheaf $D_{R}$, see \cite[Section 1]{ginsburg} for details. 
\erem

%, here $f\in \BC[T^*R]$ is a $\BC^\times$-homogeneous $\theta$-semiinvariant function of degree $\geqslant 1$. By $D(R)[f^{-1}]$ we mean the following algebra. Let $Rees(D(R))$ be the Rees algebra of $D(R)$. For $k \in \BZ_{\geqslant 1}$ let $S_k \subset Rees(D(R))/(\hbar^k)$ be the preimage of the set $\{f^m\,|\, m \geqslant 0\} \subset D(R)$ with respect to the natural surjection $Rees(D(R))/(\hbar^k) \twoheadrightarrow D(R)$. Then $D(R)[f^{-1}]$ is the algebra of $\BC^\times$-locally finite vectors in the inverse limit $\underset{\longleftarrow}{\on{lim}}\,\left(Rees(D(R))/(\hbar^k)\right)[S^{-1}_k]$ (see for example~\cite[Section~4]{losev_sheaf_quant} for the carefull definition of these objects).
%\erem

We analogously define quantizations $\overline{\CA}_c^\theta(n,r)$ of $\overline{\mathfrak{M}}^\theta(n,r)$.
We write $\CA^\theta_c(n,r)\text{-mod}$ (resp. $\overline{\CA}^\theta_c(n,r)\text{-mod}$) for the category of coherent $\CA^\theta_c(n,r)$-modules (resp. $\overline{\CA}^\theta_c(n,r)$-modules) and $\CA_c(n,r)\text{-mod}$ (resp. $\overline{\CA}_c(n,r)\text{-mod}$) for the category of finitely generated $\CA_c(n,r)$-modules (resp. $\overline{\CA}_c(n,r)$-modules).
We have the global sections functor $\Gamma^\theta_c\colon \CA^\theta_c(n,r)\text{-mod} \ra \CA_c(n,r)\text{-mod}$ (resp. $\overline{\Gamma}^\theta_c\colon \overline{\CA}^\theta_c(n,r)\text{-mod} \ra \overline{\CA}_c(n,r)\text{-mod}$). 
We say that the abelian localization holds for $(\theta,c)$ if the functor $\overline{\Gamma}^{\theta}_c$ (or equivalently $\Gamma^\theta_c$) is an abelian equivalence.

In this section, using the fact that $\overline{\CA}_{\frac{m}{n}}(n,r)$ admits a finite-dimensional representation, we will simplify the proof of the following theorem (see~\cite[Theorem~1.1 (2)]{L}).

\th{Loc}\label{loc}
For $\theta>0$ (resp. $\theta<0$), abelian localization holds for $c \in \BC$ iff $c$ is not of the form $\frac{s}{m}$, where $1 \leqslant m \leqslant n$ and $s<0$ (resp. if $c$ is not of the form $-r-\frac{s}{m}$, where $1 \leqslant m \leqslant n$ and $s<0$).
\eth

Recall that the proof of this theorem in~\cite[Section 5]{L} consists of three steps (see the beginning of~\cite[Section 5]{L}). In the first step the proof of Theorem~\ref{loc} reduces to the case when $c=\frac{m}{n'} \geqslant 0$ with $m,\,n'$ coprime, $n' \leqslant n$ and $\theta>0$. In the second step the proof reduces to the case $n=n'$ and $c>0,\,\theta > 0$. 
In the third step the claim reduces to the fact that the functor $\Gamma^\theta_c$ induces an equivalence between certain categories $\CO_\nu(\CA^\theta_c(n,r)),\, \CO_\nu(\CA_c(n,r))$ over $\CA^\theta_c(n,r),\, \CA_c(n,r)$ (see Section~\ref{cat_O_supp} for the definitions of these categories). The last claim is proved via proving that the number of simples of the categories $\CO_\nu(\CA^\theta_c(n,r)),\, \CO_\nu(\CA_c(n,r))$ are equal. 
The last step is crucial and we will simplify its proof.
So, from now on we assume that $c=\frac{m}{n}>0,\, \on{gcd}(m,n)=1$ and $\theta>0$.

Let us first of all define categories $\mathcal{O}$ and other notions and objects that we will use in the proof. We use the same notations as in~\cite{L}.

		\ssec{Loc}{Singular support and categories $\CO$}\label{cat_O_supp} 
		 Let $\nu\colon \BC^\times \ra T=\BC^\times \times T_0$ be a cocharacter given by  $t \mapsto (t,t^{d_1},\ldots,t^{d_r})$ for $d_1 \gg \ldots \gg d_r$. We will denote by $\nu_0$ the cocharacter of $T$ given by $t \mapsto (1,t^{d_1},\ldots,t^{d_r})$. 
		The cocharacter $\nu$ (resp. $\nu_0$)  induces a grading $\CA_c(n,r)=\bigoplus_i \CA_c^{i,\nu}$ (resp. $\CA_c(n,r)=\bigoplus_i \CA_c^{i,\nu_0}$). 
		We set $\CA_c^{>0,\nu}:=\bigoplus_{i>0} \CA_c^{i,\nu}$.
		The action of $\nu$ 
    	on $\CA_c(n,r)$ is Hamiltonian, let $h \in \CA^{0,\nu}_c$ be the image of $1$ under the comoment map. The grading $\CA_c(n,r) = \bigoplus_i \CA_c^{i, \nu}$ is inner and is given by $h$. 
		Define the category $\CO_{\nu}(\CA_c(n,r))$ as the full subcategory of the category $\CA_c(n,r)$-mod consisting of all modules where the action of $\CA_c^{>0}(n,r)$ is locally nilpotent.
Let us also define the category $\CO_\nu(\CA^\theta_c(n,r))$ as the full subcategory of $\CA^\theta_c(n,r)\text{-mod}$ consisting of modules that come with a good filtration stable under $h$ 
and that are supported on the contracting locus 
$\mathfrak{M}^{\theta}_{+}(n,r)$ of $\nu$ in $\mathfrak{M}^\theta(n,r)$. Recall that this contracting locus is defined by
$$\mathfrak{M}^\theta_+(n,r)=\{x\in \mathfrak{M}^\theta(n,r)\,|\,\on{lim}_{t\ra 0}\nu(t)\cdot x~\on{exists}\}.$$

		We set 
		\begin{equation*}
		\mathsf{C}_{\nu_0}(\CA_c(n,r)):=\CA^{0,\nu_0}_c(n,r)/\sum_{i>0}\CA^{-i,\nu_0}_c(n,r)\CA^{i,\nu_0}_c(n,r).
		\end{equation*}
		
	\rem{} Note that we have the natural isomorphisms  
	\begin{equation*} \mathsf{C}_{\nu_0}(\CA_c(n,r)) \iso \CA^{\geqslant 0,\nu_0}_c(n,r)/(\CA^{\geqslant 0,\nu_0}_c(n,r) \cap \CA_c(n,r)\CA^{> 0,\nu_0}_c(n,r)),
	\end{equation*} 
	\begin{equation*}
	\mathsf{C}_{\nu_0}(\CA_c(n,r)) \iso 
	\CA^{\leqslant 0,\nu_0}_c(n,r)/(\CA^{\leqslant 0,\nu_0}_c(n,r) \cap \CA^{< 0,\nu_0}_c(n,r)\CA_c(n,r)).
	\end{equation*}
	\erem	
The algebra $\mathsf{C}_{\nu_0}(\CA_c(n,r))$ will be called the {\em{Cartan subquotient}} of $\CA_c(n,r)$ with respect to $\nu_0$. 
One can also define Cartan subquotients of sheaves $\CA^{\theta}_c(n,r)$: it follows from~\cite[Proposition~5.2]{Cat_O_quant} that there exists a unique sheaf $\mathsf{C}_{\nu_0}(\CA^{\theta}_c(n,r))$ in the conical topology on ${\mathfrak{M}^\theta(n,r)}^{\nu_0(\BC^\times)}$ such that for any $\BC^\times \times \nu_0(\BC^\times)$-stable open subvariety ${U \subset \mathfrak{M}^\theta(n,r)}$ with $U^{\nu_0(\BC^\times)} \neq \varnothing$ we have $\mathsf{C}_{\nu_0}(\CA^{\theta}_c(n,r))(U^{\nu_0(\BC^\times)})
=\mathsf{C}_{\nu_0}(\CA_c^\theta(U))$.

		\prop{Loc}\label{fixed_points} For $\theta>0$ we have 
\begin{enumerate}
\item
		$
		\mathfrak{M}^{\theta}(n,r)^{\nu_0(\BC^\times)} = \bigsqcup_{n_1+\ldots+n_r=n} \prod_i \mathfrak{M}^{\theta}(n_i,1)
		$,
		where the disjoint union runs over all ordered collections $(n_{1}, \ldots, n_{r})$ of non-negative integers such that ${n_{1} + \ldots + n_{r} = n}$ (and we set $\mathfrak{M}^{\theta}(0, 1) = \on{pt}$).
\item  For the connected component $Z \subset \mathfrak{M}^{\theta}(n,r)^{\nu_0(\BC^\times)}$ which corresponds to the composition $(n_1,\ldots,n_r)$ of $n$ we have ${\mathsf{C}_{\nu_0}(\CA^{\theta}_c(n,r))}|_Z = \bigotimes_i \CA^{\theta}_{c+r-i}(n_i,1)$, where we set $\CA^{\theta}_{c}(0, 1) = \BC$ (a sheaf on $\mathfrak{M}^{\theta}(0, 1) = \on{pt}$).
\item  For a Zariski generic $c \in \BC$ we have $\mathsf{C}_{\nu_0}(\CA_c(n,r))=\bigoplus\bigotimes_i \CA_{c+r-i}(n_i,1)$, where the direct sum is taken over all ordered collections $(n_{1}, \dots, n_{r})$ as in $(1)$. 
\end{enumerate}	
		\eprop
		
		\prf
		Follows from~\cite[Proposition~3.5]{L}. The proof should be changed as follows (see the footnote $1$ above): note that the line bundle $\CO(1)$ on $\mathfrak{M}^\theta(n_i,1)$ is 
		the top exterior power of the bundle on $\mathfrak{M}^\theta(n_i,1)$ induced from the representation $\on{GL}_{n_i} \curvearrowright {\BC^{n_i}}^*$, so we conclude that $c_1(Y_\mu)=\sum_{i=1}^r(2i-r-1)c_i$ (not $\sum_{i=1}^r(r+1-2i)c_i$), i.e. the period of $\mathsf{C}_{\nu_0}(\CA^\theta_\la)$ equals  
		$\sum_{i=1}^r(\la+r+\frac{1}{2}-i)$, hence, 
		$\mathsf{C}_{\nu_0}(\CA^\theta_\la)|_{Z}\simeq \bigotimes_i \CA^\theta_{c+r-i}(n_i,1)$.
		\epr

		Let us now give an important property of the category $\mathcal{O}(\CA^{\theta}_{c}(n, r))$. 
		
	\th{}
	The category $\mathcal{O}_{\nu}(\CA^{\theta}_{c}(n, r))$ is a highest weight category, with simples indexed by the fixed points of $\nu$ and the order in the definition of a highest weight category is the contraction order on the fixed points.
	\eth
	\begin{proof}
	Since the action of $\nu$ on $\mathfrak{M}^{\theta}(n, r)$ has finitely many fixed points, this follows from \cite[Proposition 5.17]{blpw}.
	\end{proof}

 Starting from a module $M \in \CA_c(n,r)\text{-mod}$ (resp. $\CM \in \CA^\theta_c(n,r)\text{-mod}$) we can construct its {\em{associated
variety}}, to be denoted $V(M) \subset \mathfrak{M}(n,r)$ (resp. $V(\CM) \subset \mathfrak{M}^\theta(n,r)$), as follows. Consider any good filtration $F^\bullet M$ (resp. $F^\bullet \CM$), then define $V(M)$ (resp. $V(\CM)$) as the support of $\on{gr}F^\bullet M$ (resp. $\on{gr}F^\bullet \CM$) with the reduced scheme structure. It is straightforward, and well-known, that this does not depend on the choice of a  good filtration. Moreover, $V(\CM)$ is simply the support of $\CM$ considered as a sheaf on $\mathfrak{M}^{\theta}(n, r)$.

		\prop{Loc}\label{Bern_ineq}
		Let $L$ be a simple module of the category $\CO_\nu(\overline{\CA}_c(n,r))$ and let $\mathcal{I} \subset \overline{\CA}_c(n,r)$ be the annihilator of $L$. Then 
		$2\on{dim}(V(L))=\on{dim}(V(\overline{\CA}_c(n,r)/\mathcal{I}))$, where $V(\overline{\CA}_{c}(n, r)/\mathcal{I})$ is computed by considering $\overline{\CA}_{c}(n,r)/\mathcal{I}$ as a left $\overline{\CA}_{c}(n,r)$-module.
		\eprop
		
\prf
Note that by~\cite[Section 4.4]{Cat_O_quant} every module of the category $\CO_\nu(\overline{\CA}_c(n,r))$ is holonomic in the sense of~\cite{Lo1}.
Now the claim follows from~\cite[Theorems~1.2,\,1.3]{Lo1}. 
\epr
		
\ssec{Loc}{Properties of $\Gamma^\theta_c$ and categories $\mathcal{O}$}

Let us now study the interaction between the global sections functor and the categories $\mathcal{O}$.

		\prop{Loc}\label{quot}\label{Loc_sec} Assume that $\theta > 0$ and $c>-r$. Then the following holds.
		\begin{enumerate}
		\item
		 The functor $\Gamma^{\theta}_c\colon \CO_\nu(\CA^{\theta}_c(n,r))) \twoheadrightarrow \CO_\nu(\CA_c(n,r)))$ is a quotient functor. 
		\item
		The canonical adjunction morphism  $M \to \Gamma^{\theta}_c \circ \on{Loc}(M)$ is an isomorphism for any $M \in \CO_\nu(\CA_c(n,r)))$.
		\end{enumerate}
		\eprop
		
		\prf
		Part $(1)$ follows from~\cite[Section~8]{mn} and~\cite[Proposition~5.1]{L}. 	Part $(2)$ follows from the fact that $\on{Loc}$ is left adjoint to $\Gamma^{\theta}_c$ and
		general properties of quotient functors between finite categories.
		\epr
		
		Let us now return to the proof of Theorem~\ref{loc}. Recall that we can assume that $c=\frac{m}{n}>0,\, \on{gcd}(m,n)=1,\, \theta>0$.
        It now follows from~\cite[Steps 2,\,3 of Proposition~5.6]{L}
       that to finish the proof of Theorem~\ref{loc} it is enough to prove the following theorem.
		
		\th{}\label{thm:O}
		Assume $\theta > 0$ and $c = \frac{m}{n} > 0$ with $\gcd(m,n) = 1$. Then $\overline{\Gamma}^{\theta}_{c}\colon \mathcal{O}_{\nu}(\overline{\CA}^{\theta}_{c}(n, r)) \to \mathcal{O}_{\nu}(\overline{\CA}_{c}(n, r))$ is an equivalence of categories.
		\eth
		
		Thanks to Proposition~\ref{quot}, to prove Theorem~\ref{thm:O} (and therefore also Theorem~\ref{loc}) it is enough to show that $\overline{\Gamma}^{\theta}_{c}(\CL)\neq 0$ for every simple $\CL \in \CO_\nu(\overline{\CA}^\theta_c(n,r))$. We consider two cases, according to the associated variety of simples.

\ssec{}{Associated varieties of simples and proof of Theorem~\ref{thm:O}} 
First we show that in the situation of Theorem~\ref{thm:O} if $\CL \in \CO(\overline{\CA}_c^\theta(n,r))$ is such that $V(\CL) \nsubseteq \overline{\rho}^{-1}(0)$, then $\overline{\Gamma}^\theta_c(\CL)$ is infinite dimensional so in particular nonzero.
		
		\lem{Loc}\label{supp_infdim}
		Assume that $c=\frac{m}{n}>0$ with $m,\,n$ coprime. 
		For any infinite dimensional simple module $L \in \CO(\overline{\CA}_c(n,r))$ we have $V(\overline{\CA}_c(n,r)/\mathcal{I})=\overline{\mathfrak{M}}(n,r)$, where $\mathcal{I} \subset \overline{\CA}_c(n,r)$ is the annihilator of $L$. 
		\elem
		
		\prf
\begin{comment}
		Recall that $\mathcal{I}$ is a Harish-Chandra $\overline{\CA}_c$-bimodule, so $V(\overline{\CA}_c(n,r)/\mathcal{I})$ is a union of symplectic leaves of $\overline{\mathfrak{M}}(n,r)$. Let $\CL \subset V(\overline{\CA}_c(n,r)/\mathcal{I})$ be an open leaf which corresponds to a collection $\mu=(n_1,\ldots,n_k)$ and $x \in \CL \cap V(\overline{\CA}_c(n,r)/\mathcal{I})$. The module $L$ is infinite dimensional so $\on{dim}(V(L))>0$, hence, by Proposition~\ref{Bern_ineq} we have $\on{dim}V(\overline{\CA}_c(n,r)/\mathcal{I})>0$ so $n_i<n$ for any $i$. 
		Consider the restriction $(\overline{\CA}_{c}/\mathcal{I})_{\dag,x}$. By our choice of $x$, this is a nonzero finite dimensional representation of $\overline{\CA}_c(\mu)=\overline{\CA}_c(n_1,r)\otimes \ldots \otimes \overline{\CA}_c(n_k,r)$. It follows from Proposition~\ref{fd_gies} (which is independent of the intervening material) that we have $n_1=\ldots=n_k=0$ so $\CL$ is the open symplectic leaf in $\overline{\mathfrak{M}}(n,r)$, hence, $V(\overline{\CA}_c(n,r)/\mathcal{I})=\ol{\CL}=\overline{\mathfrak{M}}(n,r)$. 
\end{comment}
		Follows from the proof of Step $3$ in the proof of Proposition $4.1$ in~\cite{L}.
		\epr
		
\rem{}	
{We keep the notations of Lemma~\ref{supp_infdim}. Note that by~\cite[Theorem~1.3]{L} $\CA_c$ is a prime ring and $\mathcal{I}$ is primitive, hence, the equality $V(\overline{\CA}_c(n,r)/\mathcal{I})=\overline{\mathfrak{M}}(n,r)$ implies $\mathcal{I}=0$. We are grateful to the anonymous referee for pointing this out to us.}
\erem

The variety $\ol{\mathfrak{M}}(n, r)$ is Poisson and we denote by $\ol{\mathfrak{M}}(n, r)^{\textrm{reg}}$ its unique open symplectic leaf. The map $\ol{\rho}: \ol{\mathfrak{M}}^{\theta}(n, r) \to \ol{\mathfrak{M}}(n, r)$ is an isomorphism over $\ol{\mathfrak{M}}(n, r)^{\textrm{reg}}$ and we denote by $\ol{\mathfrak{M}}^{\theta}(n, r)^{\textrm{reg}}$ the preimage of $\ol{\mathfrak{M}}(n, r)^{\textrm{reg}}$. 
		
		\lem{Loc}\label{inter_reg}
		Assume that $c=\frac{m}{n}>0$ with $m,\,n$ coprime. 
		Let $\CL \in \CO(\overline{\CA}^{\theta}_c(n,r))$ be an irreducible sheaf such that $V(\CL) \nsubseteq \overline{\rho}^{-1}(0)$, then $V(\CL) \cap \ol{\mathfrak{M}}^\theta(n,r)^{\on{reg}} \neq \varnothing$.
		\elem
		
		\prf
		By~\cite[Theorem~A]{BPW} there exists $N \gg 0$ such that the abelian localization holds for $(c+N\theta,\theta)$. Note that the categories $\CO(\overline{\CA}^\theta_{c}(n,r)), \CO(\overline{\CA}^\theta_{c+N\theta}(n,r))$ are equivalent via translation functors and these functors preserve associated varieties. Note also that $c+N\theta$ is obviously of the form $\frac{m'}{n}$ with $m',\,n$ coprime. 
		So, applying translation functors if needed, we can assume abelian localization holds. So $\CL=\on{Loc}(L)$ for some irreducible object $L \in \CO(\overline{\CA}_c(n,r))$ such that $V(L) \nsubseteq \{0\}$. It remains to show that $V(L) \cap \overline{\mathfrak{M}}(n,r)^{\textrm{reg}} \neq \varnothing$. Let $\mathcal{I} \subset \overline{\CA}$ be the annihilator of $L$. It follows from Lemma~\ref{supp_infdim} that $V(\overline{\CA}/\mathcal{I})=\overline{\mathfrak{M}}(n,r)$. It now follows from~\cite[Theorem~1.1]{Lo1} that $V(L)$ has nonempty intersection with the open symplectic leaf of $\overline{\mathfrak{M}}(n,r)$, which is exactly $\overline{\mathfrak{M}}(n,r)^{\textrm{reg}}$.
		\epr
		
		\cor{Loc}\label{global_big_supp}
		Assume that $c=\frac{m}{n}>0$ with $m,\,n$ coprime. 
		Let $\CL \in \CO(\overline{\CA}_c^\theta(n,r))$ be an irreducible sheaf such that $V(\CL) \nsubseteq \overline{\rho}^{-1}(0)$. Then $\overline{\Gamma}^\theta_c(\CL)$ is infinite dimensional.
		\ecor
		
		\prf
        It follows from Lemma~\ref{inter_reg} that we have a point $x \in V(\CL) \cap \ol{\mathfrak{M}}^\theta(n,r)^{\textrm{reg}}$. Note now that the morphism $\overline{\rho}$ is an isomorphism over the open subvariety ${\overline{\mathfrak{M}}^\theta(n,r)^{\textrm{reg}} \subset \overline{\mathfrak{M}}^\theta(n,r)}$, $\overline{\rho}\colon \overline{\mathfrak{M}}^\theta(n,r)^{\textrm{reg}} \iso \overline{\mathfrak{M}}(n,r)^{\textrm{reg}}$. It follows that $\overline{\rho}(x) \in V(\overline{\Gamma}^{\theta}_c(\CL))$,
        hence, 	$\overline{\Gamma}^{\theta}_c(\CL)$ is infinite dimensional. 
		\epr
		
Let us now deal with the case $V(\mathcal{L}) \subseteq \overline{\rho}^{-1}(0)$. 
		
				\prop{Loc}\label{fd_sh}
 		The number of simple coherent $\overline{\CA}^{\theta}_c(n,r)$-modules supported on $\overline{\rho}^{-1}(0)$ cannot be bigger then $1$.
 		\eprop
 		
 		\prf
 		This follows from Step $5$ of the proof of Proposition $4.1$ in~\cite{L}.
\begin{comment}
 		To a module in the category $\CO$ we can assign its characteristic cycle that is a formal linear combination of the irreducible components of the contractiong locus $\overline{\mathfrak{M}}_{+}^\theta(n,r)$ of $\BC^\times$ acting on $\overline{\mathfrak{M}}^\theta(n,r)$ via $\nu$ ($\overline{\mathfrak{M}}_+^\theta(n,r):=\{x \in \overline{\mathfrak{M}}^\theta(n,r)\,|\,\exists\,\underset{t \ra 0}{\on{lim}}\,\nu(t)\cdot x\}$). It follows from~\cite[Section~6]{blpw} that this map is injective. Note also that the irreducible components of $\overline{\mathfrak{M}}_+^\theta(n,r)$ are lagrangian so have dimension $nr-1=\on{dim}\overline{\rho}^{-1}(0)$
 		 so it is enough to show that $\on{dim}H^{2nr-2}(\overline{\rho}^{-1}(0),\BC)=1$. Recall now that $\overline{\rho}^{-1}(0)=\rho^{-1}(0)$ and the latter is homotopicaly equivalent to $\mathfrak{M}^\theta(n,r)$ so 
 		 $H^{2nr-2}(\overline{\rho}^{-1}(0),\BC) \simeq H^{2nr-2}(\mathfrak{M}^\theta(n,r),\BC)$. Now it follows from~\cite[Theorem~3.8]{NY} that 
 		 $\on{dim}H^{2nr-2}(\mathfrak{M}^\theta(n,r))=1$.
\end{comment}
 		 \epr
 		
 		If a simple coherent $\overline{\mathcal{A}}^{\theta}_{c}(n, r)$-module supported on $\overline{\rho}^{-1}(0)$ exists, we will denote it by $\CL^{\mathrm{fin}}$.\\
 		
 		We are now ready to prove Theorem~\ref{thm:O}.
 		
 		\begin{proof}[Proof of Theorem~\ref{thm:O}] 
 		From Proposition~\ref{quot} it follows that it is enough to show that for any simple $\CL \in \CO(\overline{\CA}^{\theta}_c(n,r))$ we have $\overline{\Gamma}^\theta_c(\CL)\neq 0$. If $\CL \not\simeq \CL^{\mathrm{fin}}$, then the desired result follows from Corollary~\ref{global_big_supp}. It also follows from Corollary~\ref{global_big_supp} that for any such $\CL$ we have $\overline{\Gamma}^{\theta}_{c}(\CL) \not\simeq \overline{L}_{\frac{m}{n},r}$. 
		The functor $\overline{\Gamma}^\theta_c$ is a quotient functor and $\overline{\Gamma}^{\theta}_{c}(\CL) \not\simeq \overline{L}_{\frac{m}{n},r}$ for any $\CL  \not\simeq \CL^{\mathrm{fin}}$, hence, $\overline{\Gamma}^{\theta}_{c}(\CL^{\mathrm{fin}}) \simeq \overline{L}_{\frac{m}{n},r} \neq 0$, where the last inequality follows from Remark~\ref{zero_dim}.
 		\end{proof}

\section{Representations with minimal support}\label{repres_min_supp}
\ssec{}{Construction of minimally supported modules} Now we generalize results on finite-dimensional representations to representations with minimal support (see Section~\ref{cat_O_supp}). We continue studying the algebra $\overline{\CA}_{\frac{m}{n}}(n, r)$, and set $d := \gcd(m,n)$. The difference now is that we do not assume $d = 1$. Let $m_{0} := \frac{m}{d}$, $n_{0} := \frac{n}{d}$. Let $\lambda$ be a partition of $d$ and consider the partition $n_{0}\lambda$ of $n$. We will denote by $M(O(n_{0}\lambda))$ the irreducible $\chi$-equivariant $D$-module on $\mathfrak{sl}_{n}$ associated to the nilpotent orbit $O(n_{0}\lambda)$ where, recall, $\chi = \frac{m}{n}\on{tr}$. 

\lem{}\label{lemma:highest weight}
We have $F_{n, m, 1}(M(O(n_{0}\lambda))) = S_{\frac{n}{m}}(m_{0}\lambda)$, where $S_{\frac{n}{m}}(m_{0}\lambda)$ is the irreducible highest-weight representation of $H_{\frac{n}{m}}(m, 1)$ with highest weight $m_{0}\lambda$.
\elem
\begin{proof}
Note that a similar result is proven in \cite[Theorem 9.12]{e} for $F^{*}_{n, m, 1}(M(O(n_{0}\lambda)))$, where \emph{highest} weight is replaced by \emph{lowest} weight, see Remark \ref{rmk:Fourier}. The same proof applies, {\it mutatis mutandis}, in our situation. More precisely, the functions constructed in \cite[Lemma 9.13]{e} are annihilated by $x_{1}, \dots, x_{m}$ and span a copy of the representation of $S_{m}$ indexed by $m_{0}\lambda$, see \cite[Lemma 9.15]{e}.
\end{proof}

It follows that we have a $q$-graded $\on{GL}_{r}$-equivariant isomorphism
\begin{equation}\label{isomo}
\overline{L}_{\frac{m}{n}, r}(n_{0}\lambda) := (M(O(n_{0}\lambda)) \otimes \BC[\Hom(\BC^n, \BC^r)])^{\on{GL}_{n}} \iso (S_{\frac{n}{m}}(m_{0}\lambda) \otimes (\BC^{r*})^{\otimes m})^{S_{m}}.
\end{equation}

\noindent From here, we can read the character of $\overline{L}_{\frac{m}{n}, r}(n_{0}\lambda)$, we will do this explicitly in Section~\ref{char_min_supp}. Note that $\overline{L}_{\frac{m}{n},r}(n_{0}\lambda)$ is an irreducible $\overline{\CA}_{\frac{m}{n}}(n,r)$-module. We claim that it has minimal support. Recall from the introduction that, provided ${\overline{L}_{\frac{m}{n}, r}(n_{0}\la) \in \CO_{\nu}(\overline{\CA}_{\frac{m}{n}}(n,r))}$, this means that the GK dimension of $\overline{L}_{\frac{m}{n}, r}(n_{0}\lambda)$ is precisely $d-1$, where, as above, $d = \gcd(m,n)$. So we start by showing that $\overline{L}_{\frac{m}{n}, r}(n_{0}\la) \in \CO_{\nu}(\overline{\CA}_{\frac{m}{n}}(n,r))$.

\prop{}
Let $\nu\colon \BC^\times \ra T$ be a generic co-character given by $t \mapsto (t^k,t^{d_1},\ldots,t^{d_r})$ such that $k>0$. Then $\overline{L}_{\frac{m}{n},r}(n_0\la) \in \CO_{\nu}(\overline{\CA}_{\frac{m}{n}}(n,r))$.
\eprop
\prf
Recall that the action of $\nu$ on $\overline{\CA}_{\frac{m}{n}}(n,r)$ is Hamiltonian. 
Let $h \in \overline{\CA}_{\frac{m}{n}}^{0,\nu}$ be the image of $1$ under the comoment map. To check that $\overline{L}_{\frac{m}{n},r}(n_0\la) \in \CO_{\nu}(\overline{\CA}_{\frac{m}{n}}(n,r))$ it is enough to show that generalized eigenvalues of $h$ acting on $\overline{L}_{\frac{m}{n},r}(n_0\la)$ are bounded from above. This follows from the existence of the $T$-equivariant isomorphism~(\ref{isomo}), the fact that $(\BC^{r*})^{\otimes m}$ is finite dimensional and the fact that $S_{\frac{n}{m}}(m_0\la)$ lies in the category $\CO$ over the Cherednik algebra, see Lemma \ref{lemma:highest weight}. %(the last statement can be easily deduced from~\cite[Lemma~9.2]{e}).
\epr

Let us now show that the module $\overline{L}_{\frac{m}{n}, r}(n_{0}\lambda)$ has minimal support. In order to do this, it is enough to compute its GK dimension. 

\prop{}\label{prop:minsupport}
The GK dimension of the $\overline{\CA}_{\frac{m}{n}}(n, r)$-representation $\overline{L}_{\frac{m}{n}, r}(n_{0}\lambda)$ is exactly $d - 1$, where $d = \gcd(m,n)$. In particular, $\overline{L}_{\frac{m}{n}, r}(n_{0}\lambda)$ has minimal support.
\eprop
\begin{proof}
Recall that we have $\overline{L}_{\frac{m}{n}, r}(n_{0}\lambda) = (M(O(n_{0}\lambda)) \otimes \BC[\Hom(\BC^n, \BC^r)])^{\on{GL}_{n}}$. The ${D(\mathfrak{sl}_{n} \oplus \Hom(\BC^n, \BC^r))}$-module $M(O(n_{0}\lambda)) \otimes \BC[\Hom(\BC^n, \BC^r)]$ is holonomic, so it admits a good filtration that induces a good filtration on $\overline{L}_{\frac{m}{n}, r}(n_{0}\lambda)$, both as an $\overline{\CA}_{\frac{m}{n}}(n, r)$-module and as a $H^{\textrm{sph}}_{\frac{n}{m}}(m, r)$-module, where we take the Bernstein filtration on the ring of differential operators $D(\mathfrak{sl}_{n} \oplus \Hom(\BC^n, \BC^r))$. Indeed, that this filtration is good for the $\overline{\CA}_{\frac{m}{n}}(n, r)$-module structure follows by definition, and that it is good for the $H^{\textrm{sph}}_{\frac{n}{m}}(m, r)$-module structure follows from the formulas in Proposition~\ref{prop:operators}.
So it is enough to show that the GK dimension of the $H^{\textrm{sph}}_{\frac{n}{m}}(m, r)$-module ${(S_{\frac{n}{m}}(m_{0}\lambda) \otimes (\BC^r)^{\otimes m})^{S_{m}}}$ is exactly $d-1$. To do this, it is enough to show that the GK dimension of the $H_{\frac{n}{m}}(m, r)$-module $S_{\frac{n}{m}}(m_{0}\lambda) \otimes (\BC^r)^{\otimes m}$ is $d-1$. But thanks to Proposition~\ref{prop:morita}, this coincides with the GK dimension of the $H_{\frac{n}{m}}(m, 1)$-module $S_{\frac{n}{m}}(m_{0}\lambda)$. This is precisely $d-1$ by~\cite[Theorem 1.6]{wi}. We are done. 
\end{proof}

\ssec{}{Coincidence of labels: $L_{\frac{m}{n},r}(n_0\la)=L(\varnothing,\ldots,\varnothing,n_0\la)$}
By Proposition~\ref{prop:minsupport} the module $L_{\frac{m}{n},r}(n_0\la):= \BC[x] \otimes (M(O(n_{0}\lambda)) \otimes \BC[\Hom(\BC^n, \BC^r)])^{\on{GL}(V)}$ is minimally supported so it must have the form $L(\varnothing,\ldots,\varnothing,n_0\la')$ for some partition $\la'$ of $d$. The goal of this section is to show that $\la'=\la$.
We set $G:=\on{GL}(V)$. 
Consider the $D(\overline{R})\text{-}\overline{\CA}_{\frac{m}{n}}(n,r)$-bimodule 
\begin{equation*}
\overline{\CQ}_{\frac{m}{n}}:=D(\overline{R})/(D(\overline{R})\{\xi_{\overline{R}}-\frac{m}{n}\on{tr}\xi\,|\, \xi \in \mathfrak{gl}(V)\}).
\end{equation*}
Recall the character $\on{det}\colon \on{GL}(V) \ra \BC^\times$ and consider the corresponding space of $\on{GL}(V)$-semiinvariants $\overline{\CQ}_{\frac{m}{n}}^{\on{GL}(V),\on{det}}$ which is naturally a $\overline{\CA}_{\frac{m}{n}+1}(n,r)\text{-}\overline{\CA}_{\frac{m}{n}}(n,r)$-bimodule. 
Recall that $\frac{m}{n}>0$, so by Theorem~\ref{loc} the localization holds for $(\frac{m}{n},\on{det}),\, (\frac{m}{n}+1,\on{det})$. It now follows from~\cite[Proposition~5.2]{bl} and~\cite[Proposition~6.31]{BPW}
\begin{comment}
(see also Section~2.2.2 and Lemma~5.25 of arXiv:1309.1716v3)
\end{comment}
that the functor
$\overline{\CQ}_{\frac{m}{n}}^{\on{GL}(V),\on{det}}\otimes_{\overline{\CA}_{\frac{m}{n}}(n,r)} \bullet $ induces an equivalence 
\begin{equation*}
\overline{\CT}_{\frac{m}{n} \ra \frac{m}{n}+1}\colon \CO_{\nu}(\overline{\CA}_{\frac{m}{n}}(n,r)) \iso \CO_\nu(\overline{\CA}_{\frac{m}{n}+1}(n,r))
\end{equation*}
which we will call a {\em{translation equivalence}}.
Let us now prove the following lemma.
\lem{}\label{trans_equiv_simp}
Under the equivalence $\overline{\CT}_{\frac{m}{n} \ra \frac{m}{n}+1}$ the module $\overline{L}_{\frac{m}{n},r}(n_0\la)$ maps to $\overline{L}_{\frac{m}{n}+1,r}(n_0\la)$.
\elem
\prf
For $c \in \BC$ we recall that $D(\overline{R})\text{-}\on{mod}^{G,c}$ is the category of $(G,c)$-equivariant $D$-modules on $\overline{R}$.
Note that we have an equivalence 
$$
\widetilde{\CT}_{c \ra c+1}\colon D(\overline{R})\text{-}\on{mod}^{G,c} \iso D(\overline{R})\text{-}\on{mod}^{G,c+1}
$$ 
given by tensoring with a one-dimensional $\on{GL}(V)$-module $\BC_{\on{det}}$ on which $\on{GL}(V)$ acts via $\on{det}$. Now for $c = \frac{m}{n} = \frac{m_{0}d}{n_{0}d}$ we set $\widetilde{L}_c:=M(O(n_0\la)) \otimes \on{Hom}(V,W)$ and consider it as an object of the category $D(\overline{R})\text{-}\on{mod}^{G,c}$. It follows from the definitions that $\widetilde{L}_{c+1}=\BC_{\on{det}}\otimes \widetilde{L}_c=\widetilde{\CT}_{c \ra c+1}(\widetilde{L}_c)$.

Recall that the categories 
$\CO_\nu(\overline{\CA}_{\frac{m}{n}}(n,r)),\, \CO_\nu(\overline{\CA}_{\frac{m}{n}+1}(n,r))$ are highest weight.
It follows from the definitions that the functor $\overline{\CT}_{\frac{m}{n} \ra \frac{m}{n}+1}$ sends $\Delta_{\frac{m}{n}}(p)$ to $\Delta_{\frac{m}{n}+1}(p)$ so it must be label preserving. 
\begin{comment}
It follows from~\cite[Proposition~6.27]{BPW} that the functor $\CT_{\frac{m}{n} \ra \frac{m}{n}+1}$ preserves characteristic cycles, hence, is label preserving. 
\end{comment}
We conclude that to prove lemma it is enough to show that the following diagram is commutative.
 \begin{equation}\label{trans_ham}
\xymatrix{D(\overline{R})\text{-}\on{mod}^{G,\frac{m}{n}} \ar[d]^{\widetilde{\CT}_{\frac{m}{n} \ra \frac{m}{n}+1}} \ar[rr]^{\overline{\pi}_{\frac{m}{n}}} && \ar[d]^{\overline{\CT}_{\frac{m}{n} \ra \frac{m}{n}+1}} \overline{\CA}_{\frac{m}{n}}(n,r)\text{-}\on{mod} \\
D(\overline{R})\text{-}\on{mod}^{G,\frac{m}{n}+1} \ar[rr]^{\overline{\pi}_{\frac{m}{n}+1}} && \overline{\CA}_{\frac{m}{n}+1}(n,r)\text{-}\on{mod},
} 
 \end{equation}
 where we denote by $\overline{\pi}_{\frac{m}{n}},\overline{\pi}_{\frac{m}{n}+1}$ the Hamiltonian reduction functors. 
Let us now define the sheaf versions of the functors $\overline{\pi}_{\frac{m}{n}},\overline{\pi}_{\frac{m}{n}+1},\overline{\CT}_{c\ra c+1}$. We denote
$$\overline{\pi}_c^\theta \colon D_{\overline{R}}\text{-}\on{mod}^{G,c} \ra \overline{\CA}_c^\theta(n,r)\text{-}\on{mod}, \quad \CF \mapsto (\CF|_{(T^*\overline{R})^{\theta-\on{st}}})^{\on{GL}(V)},$$

\noindent the functor $\overline{\pi}^\theta_{c+1}$ can be defined similarly. We denote by $\overline{\CT}^\theta_{c \ra c+1}$ the functor from $\overline{\CA}_c^\theta(n,r)\text{-}\on{mod}$ to $\overline{\CA}_{c+1}^\theta(n,r)\text{-}\on{mod}$ given by left tensoring with the sheaf of bimodules 
$(\overline{\CQ}_c|_{(T^*\overline{R})^{\theta-\on{st}}})^{\on{GL}(V),\on{det}}$. 
It follows from~\cite[Proposition~6.31]{BPW} that the following diagram is commutative: 
\begin{equation*}
\xymatrix{
\overline{\CA}^\theta_c(n,r)\text{-}\on{mod} \ar[r]^{\overline{\Gamma}(\bullet)} \ar[d]^{\overline{\CT}^\theta_{c \ra c+1}} & \overline{\CA}_c(n,r)\text{-}\on{mod} \ar[d]^{\overline{\CT}_{c \ra c+1}} \\
\overline{\CA}^\theta_{c+1}(n,r)\text{-}\on{mod} \ar[r]^{\overline{\Gamma}(\bullet)} & \overline{\CA}_{c+1}(n,r)\text{-}\on{mod},
}
\end{equation*}
so to prove that~(\ref{trans_ham}) is commutative it remains to check the commutativity of the following diagram:
\begin{equation*}
\xymatrix{
D_{\overline{R}}\text{-}\on{mod}^{G,\frac{m}{n}} \ar[rr]^{\overline{\pi}^\theta_{\frac{m}{n}}} \ar[d]^{\widetilde{\CT}_{\frac{m}{n} \ra \frac{m}{n}+1}} && \overline{\CA}^\theta_c(n,r)\text{-}\on{mod} \ar[d]^{\overline{\CT}^\theta_{\frac{m}{n} \ra \frac{m}{n}+1}} \\
D_{\overline{R}}\text{-}\on{mod}^{G,\frac{m}{n}+1} \ar[rr]^{\overline{\pi}^\theta_{\frac{m}{n}+1}} && \overline{\CA}^\theta_{\frac{m}{n}+1}(n,r)\text{-}\on{mod}.
}
\end{equation*}
This was observed in~\cite[(5.1)]{bl}.
\epr

It follows from Lemma~\ref{trans_equiv_simp} that we can assume that $\frac{m}{n} \in \BC$ is Zariski generic, so we have an isomorphism $\mathsf{C}_{\nu_0}(\CA_{\frac{m}{n}}(n,r))\simeq \bigoplus\bigotimes_i \CA_{\frac{m}{n}+r-i}(n_i,1)$ (see Proposition~\ref{fixed_points}), where the sum is taken over all ordered collections $(n_{1}, \dots, n_{r})$ of non-negative integers such that $n_{1}+\ldots+n_{r}=n$.

According to~\cite[Sections~6.3,~6.3]{L} the module $L(\varnothing,\ldots,\varnothing,n_0\la')$ can be described as follows. Let $L^A(n_0\la')$ be the module in category $\mathcal{O}$ over 
$\CA_{\frac{m}{n}}(n,1)$ corresponding to $n_0\la'$. We have the projection
$\kappa\colon \mathsf{C}_{\nu_0}(\CA_{\frac{m}{n}}(n,r)) \twoheadrightarrow \CA_{\frac{m}{n}}(n,1)$ which makes $L^A(n_0\la')$ a module over $\mathsf{C}_{\nu_0}(\CA_{\frac{m}{n}}(n,r))$. Then 
$L(\varnothing,\ldots,\varnothing,n_0\la')$ is the maximal quotient of $\Delta_{\nu_0}(L^A(n_0\la'))$ that does not intersect the highest weight space $L^A(n_0\la')$, where
$$\Delta_{\nu_0}(L^A(n_0\la')):=\CA_{\frac{m}{n}}(n,r) \otimes_{\CA^{\geqslant 0,\nu_0}_{\frac{m}{n}}(n,r)} L^A(n_0\la').$$ 
\noindent We conclude that 
$L(\varnothing,\ldots,\varnothing,n_0\la')^{\textrm{hw}} \simeq L^A(n_0\la')$ as modules over $\mathsf{C}_{\nu_0}(\CA_{\frac{m}{n}}(n,r))$, 
here $L_{\frac{m}{n},r}(\varnothing,\ldots,\varnothing,n_0\la')^{\textrm{hw}}$ is the highest weight component of $L_{\frac{m}{n},r}(n_0\la)$ w.r.t. $\nu_0$. 
Note also that it follows from~\cite[Proposition~7.8]{egl} that we have an isomorphism between $\CA_{\frac{n}{m}}(n,1)$-modules $L_{\frac{n}{m},1}(n_0\la')$ and $L^A(n_0\la')$.
So to show that $\la'=\la$ it is enough to check that ${L_{\frac{m}{n},r}(n_0\la)^{\textrm{hw}} \simeq L_{\frac{m}{n},1}(n_0\la)}$ as a module over $\CA_{\frac{m}{n}}(n,1) \hookrightarrow \mathsf{C}_{\nu_0}(\CA_{\frac{m}{n}}(n,r))$.

Note that we have a natural isomorphism of vector spaces $L_{\frac{m}{n},r}(n_0\la)^{\textrm{hw}} \simeq L_{\frac{m}{n},1}(n_0\la)$. To see this let us denote by $W^{\textrm{lw}} \subset W$ the lowest weight component with respect to the $\BC^\times$-action via $\nu_0$.
Set $R^{\textrm{lw}}:=\mathfrak{gl}(V) \oplus \on{Hom}(V,W^{\textrm{lw}}) \subset R$, $M:=M(O(n_0\la))$. We have a tautological identification $\overline{L}_{\frac{m}{n},r}(n_0\la)^{\textrm{hw}}= (M\otimes \BC[\on{Hom}(V,W^{\textrm{lw}})])^{\on{GL}(V)}=\overline{L}_{\frac{m}{n},1}(n_0\la)$. It remains to prove the following proposition. 
		\prop{}\label{main_prop_sec}
		The homomorphism $\kappa\colon \mathsf{C}_{\nu_0}(\CA_{\frac{m}{n}}(n,r)) \twoheadrightarrow \CA_{\frac{m}{n}}(n,1)$ intertwines the actions 
		 \begin{equation}\label{interwine_main}
		 \mathsf{C}_{\nu_0}(\CA_{\frac{m}{n}}(n,r)) \curvearrowright L_{\frac{m}{n},r}(n_0\la)^{\textrm{hw}}=L_{\frac{m}{n},1}(n_0\la) \curvearrowleft \CA_{\frac{m}{n}}(n,1).
		 \end{equation}
		\eprop
		 
		 We will prove this Proposition in the end of this section. Recall that the cocharacter $\nu_0\colon \BC^\times \ra T$ is given by $t \mapsto (1,t^{d_1},\ldots,t^{d_r})$ with $d_1 \gg \ldots \gg d_r$. 
		 Consider now the following cocharacter $\nu_0'\colon \BC^\times \ra T,\, t \mapsto (1,t^{d_1-d_r},t^{d_2-d_r}\ldots,t^{d_{r-1}-d_r},1)$. Note that the $\BC^\times$-actions on $\mathfrak{M}(n,r),\, \mathfrak{M}^\theta(n,r)$ corresponding to $\nu_0, \nu_0'$ coincide. So we have the identification
		 $\mathsf{C}_{\nu_0'}(\CA_{\frac{m}{n}}(n,r))=\mathsf{C}_{\nu_0}(\CA_{\frac{m}{n}}(n,r))$. Note also that $R^{\textrm{lw}}=R^{\nu'_0}$. 
		 
		 Let us now note that we have the natural isomorphism $\mathsf{C}_{\nu_0'}(D(R)) \simeq D(R^{\textrm{lw}})$ induced by the embedding $D(R^{\textrm{lw}}) \hookrightarrow D(R)$ which clearly intertwines the actions
		 \begin{equation}\label{inter_D}
		  \mathsf{C}_{\nu_0'}(D(R)) \curvearrowright (M \otimes \BC[\on{Hom}(V,W)])^{\textrm{hw}}=M\otimes \BC[\on{Hom}(V,W^{\textrm{lw}})] \curvearrowleft  D(R^{\textrm{lw}}).
		 \end{equation}

Set $c:=\frac{m}{n}$. Let us now understand the relation between the homomorphism 
\begin{equation*} \kappa\colon \mathsf{C}_{\nu'_0}(D(R)/\!\!/\!\!/_{\frac{m}{n}}\on{GL}(V)) \twoheadrightarrow D(R^{\textrm{lw}})/\!\!/\!\!/_{\frac{m}{n}}\on{GL}(V)
\end{equation*} and the isomorphism $\mathsf{C}_{\nu'_0}(D(R)) \simeq D(R^{\textrm{lw}})$ above. 
Note that both of them are induced by the corresponding isomorphisms of sheaves 
\begin{equation*} \mathsf{C}_{\nu'_0}(\CA^\theta_{\frac{m}{n}}(n,r))|_{\mathfrak{M}^\theta(n,1)} \overset{\kappa^\theta}{\simeq} \CA^\theta_{\frac{m}{n}}(n,1),\, 
\mathsf{C}_{\nu'_0}(D_R) \simeq D_{R^{\textrm{lw}}}.
\end{equation*}

We start with two general lemmas. Let $\CA$ be an associative algebra equipped with a $\BZ$-grading $\CA=\bigoplus_{i \in \BZ}\CA^i$ and $J \subset \CA$ is a $\BZ$-graded two-sided ideal. 

\lem{}\label{cartan_via_quot}		 
We have a natural 
isomorphism $\mathsf{C}(\CA)/[J^{\geqslant 0}] \iso \mathsf{C}(\CA/J)$, where $\mathsf{C}$ corresponds to taking the Cartan subquotient and $[J^{\geqslant 0}] \subset \mathsf{C}(\CA)=\CA^{\geqslant 0}/(\CA^{\geqslant 0} \cap \CA\CA^{>0})$ is the image of $J^{\geqslant 0}$ under the natural morphism $J^{\geqslant 0} \ra \mathsf{C}(\CA)$.
\elem		 
\prf
Both of them can be naturally identified with $\CA^{\geqslant 0}/((\CA^{\geqslant 0} \cap \CA\CA^{>0})+J^{\geqslant 0})$.
\epr

Let $\CA$ be an associative algebra as above and assume that the $\BZ$-grading is
induced by some element $h \in \CA$, i.e. $\CA^i=\{a \in \CA\,|\, [h,a]=ia\}$.
Let $\mathfrak{l}$ be a reductive Lie algebra. Assume that we are given a locally-finite completely reducible action $\mathfrak{l} \curvearrowright \CA$ which commutes with the $\BZ$-grading and is induced by a map of Lie algebras $\phi\colon \mathfrak{l} \ra \on{Der}(\CA)$. Assume also that we have a quantum comoment map $\upsilon\colon \CU(\mathfrak{l}) \ra \CA$ for this action, i.e. a homomorphism of algebras $\upsilon$ such that $[\upsilon(x),-]=\phi(x)(-),\, \forall x \in \mathfrak{l}$.
\rem{}\label{image_comom}
Note that the image of $\upsilon$ lies in $\CA^0$.
Indeed, pick $x \in \mathfrak{l}$, we have to show that $\upsilon(x) \in \CA^0$. Note that $h \in \CA^0$ and $\phi(x)(\CA^0) \subset \CA^0$ so we must have $[h,\upsilon(x)] \in \CA_0$. We can decompose $\upsilon(x)=\sum_{i \in \BZ} a_i$ with $a_i \in \CA_i$ and note that $[h,\upsilon(x)]=\sum_{i \in \BZ} ia_i$ lies in $\CA^0$ only if $a_i=0$ for every $i \neq 0$. The claim follows. 
\erem

Fix a character $\la\colon \mathfrak{l} \ra \BC$. Let $P \subset \CA$ be the left ideal generated by ${\{\xi(x)-\la(x)\,|\, x \in \mathfrak{l}\}}$. We define $\CA/\!\!/\!\!/_\la \mathfrak{l}:=(\CA/P)^{\mathfrak{l}}=\CA^{\mathfrak{l}}/P^{\mathfrak{l}}$. We analogously define $\mathsf{C}(\CA)/\!\!/\!\!/_\la \mathfrak{l}$ using the quantum comoment map $[\upsilon]\colon \CU(\mathfrak{l}) \ra \mathsf{C}(\CA)$ which is well defined by Remark~\ref{image_comom}. Note that $P^{\mathfrak{l}} \subset \CA^{\mathfrak{l}}$ is a two-sided ideal.

\lem{}\label{main_morphism}
For any character $\la\colon \mathfrak{l} \ra \BC$ we have a natural epimorphism 
$$\mathsf{C}(\CA/\!\!/\!\!/_\la \mathfrak{l}) \twoheadrightarrow \mathsf{C}(\CA)/\!\!/\!\!/_\la \mathfrak{l}.$$
\elem
\prf
By Lemma~\ref{cartan_via_quot} (applied to $\CA^\mathfrak{l} \supset P^{\mathfrak{l}}$) we have ${\mathsf{C}(\CA/\!\!/\!\!/_\la \mathfrak{l})=\mathsf{C}(\CA^{\mathfrak{l}})/[(P^{\geqslant 0})^{\mathfrak{l}}]}$. Note also that by the definitions $\mathsf{C}(\CA)/\!\!/\!\!/_\la \mathfrak{l}=\mathsf{C}(\CA)^{\mathfrak{l}}/[(P^{\geqslant 0})^{\mathfrak{l}}]$. Now the claim follows from the fact that $\on{Id}\colon \CA \ra \CA$ induces a surjective homomorphism $\mathsf{C}(\CA^{\mathfrak{l}}) \twoheadrightarrow
\mathsf{C}(\CA)^{\mathfrak{l}}$. 
\epr

Using Lemma~\ref{main_morphism} and the fact that the open sets $((T^*R)_f/\!\!/\!\!/^\theta\on{GL}(V))^{\nu_0'(\BC^\times)}$, 
where $f\in \BC[T^*R]^{\nu'_0(\BC^\times)}\cap 
\BC[T^*R]^{G,\theta}$ is homogeneous 
of positive degree, form a basis for the conical Zariski topology on $\mathfrak{M}^\theta(n,r)^{\nu_0'(\BC^\times)}$, we obtain a homomorphism 
$\mathsf{C}_{\nu'_0}(D_R/\!\!/\!\!/^\theta_c \on{GL}(V))|_{\mathfrak{M}^\theta(n,r)} \ra \mathsf{C}_{\nu'_0}(D_R)/\!\!/\!\!/^\theta_c \on{GL}(V)$ of sheaves on $T^*R^{\textrm{lw}}/\!\!/\!\!/^\theta\on{GL}(V)$.
\lem{}\label{sheaf_main_morphism}
The homomorphism $\mathsf{C}_{\nu'_0}(D_R/\!\!/\!\!/^\theta_c \on{GL}(V))|_{\mathfrak{M}^\theta(n,1)} \ra \mathsf{C}_{\nu'_0}(D_R)/\!\!/\!\!/^\theta_c \on{GL}(V)$ is an isomorphism.
\elem
\prf
Note that $\mathsf{C}_{\nu'_0}(D_R/\!\!/\!\!/^\theta_c \on{GL}(V))|_{\mathfrak{M}^\theta(n,1)},\, \mathsf{C}_{\nu'_0}(D_R)/\!\!/\!\!/^\theta_c \on{GL}(V)$ are filtered and by~\cite[Proposition~5.2 (2)]{Cat_O_quant} we have 
\begin{equation*}
\on{gr}\mathsf{C}_{\nu'_0}(D_R/\!\!/\!\!/^\theta_c G)|_{\mathfrak{M}^\theta(n,1)}=\CO_{\mathfrak{M}^\theta(n,1)}=\on{gr}\mathsf{C}_{\nu'_0}(D_R)/\!\!/\!\!/^\theta_c \on{GL}(V).
\end{equation*}
The homomorphism $\mathsf{C}_{\nu'_0}(D_R/\!\!/\!\!/^\theta_c \on{GL}(V))|_{\mathfrak{M}^\theta(n,1)} \ra \mathsf{C}_{\nu'_0}(D_R)/\!\!/\!\!/^\theta_c \on{GL}(V)$ preserves filtrations and the associated graded equals to $\on{Id}\colon \CO_{\mathfrak{M}^\theta(n,1)} \iso \CO_{\mathfrak{M}^\theta(n,1)}$. The claim follows.
\epr

Lemma~\ref{sheaf_main_morphism} gives us an explicit construction of an isomorphism
\begin{equation*}
\mathsf{C}_{\nu'_0}(D_R/\!\!/\!\!/^\theta_c \on{GL}(V))|_{\mathfrak{M}^\theta(n,1)} \iso \mathsf{C}_{\nu'_0}(D_R)/\!\!/\!\!/^\theta_c \on{GL}(V).
\end{equation*}
Note that the sheaf $\mathsf{C}_{\nu'_0}(D_R/\!\!/\!\!/^\theta_c \on{GL}(V))|_{\mathfrak{M}^\theta(n,1)}$ is exactly $\mathsf{C}_{\nu_0'}(\CA^\theta_c(n,r))$.
We claim that the sheaves $\CA^\theta_{c}(n,1),\, \mathsf{C}_{\nu_0'}(D_R)/\!\!/\!\!/^\theta_c \on{GL}(V)$ are canonically isomorphic and the isomorphism between them is induced by the isomorphism $D(R^{\textrm{lw}}) \iso \mathsf{C}_{\nu_0'}(D(R))$.
To see that it is enough to prove the following lemma.
\lem{}\label{pass}
The isomorphism $D(R^{\textrm{lw}}) \iso \mathsf{C}_{\nu'_0}(D(R))$ induces
an isomorphism 
$I_{\textrm{lw}} \iso [I^{\geqslant 0,\nu'_0}]$, where
$$
I=D(R)\{\xi_R-c\on{tr}\xi\,|\,\xi \in \mathfrak{gl}(V)\}, \, \text{and}\, I_{\textrm{lw}}=D(R^{\textrm{lw}})\{\xi_{R^{\textrm{lw}}}-c\on{tr}\xi\,|\,\xi \in \mathfrak{gl}(V)\}.
$$
\elem
\prf
It is enough to check that that the isomorphism $D(R^{\textrm{lw}}) \iso \mathsf{C}_{\nu'_0}(D(R))$ induces a surjective map $I_{\textrm{lw}} \twoheadrightarrow [I^{\geqslant 0,\nu'_0}]$. The ideal $I_{\textrm{lw}}$ is generated by the elements of the form $\xi_{R^{\textrm{lw}}}-c\on{tr}\xi,\, \xi \in \mathfrak{gl}(V)$. Note that $\xi_R-\xi_{R^{\textrm{lw}}} \in D(R)^{>0,\nu_0'}$ because the action of $\nu_0'(\BC^\times)$ contracts $R$ to $R^{\textrm{lw}}=R^{\nu_0'(\BC^\times)}$. It then follows that, in the Cartan subquotient $\mathsf{C}_{\nu_0'}(D(R))\simeq D(R)^{\geqslant 0,\nu_0'}/(D(R)^{\geqslant 0,\nu_0'} \cap D(R)D(R)^{>0,\nu_0'})$ we have ${[\xi_{R^{\textrm{lw}}}-c\on{tr}\xi]=[\xi_R-c\on{tr}\xi]}$. The claim follows.

%It is enough to check that that the isomorphism $D(R^{\textrm{lw}}) \iso \mathsf{C}_{\nu'_0}(D(R))$ induces a surjective map $I_{\textrm{lw}} \twoheadrightarrow [I^{\geqslant 0,\nu'_0}]$. The ideal $I_{\textrm{lw}}$ is generated by the elements of the form $\xi_{R^{\textrm{lw}}}-c\on{tr}\xi,\, \xi \in \mathfrak{gl}(V)$ and $[\xi_{R^{\textrm{lw}}}-c\on{tr}\xi]=[\xi_R-c\on{tr}\xi]$ in $\mathsf{C}_{\nu_0'}(D(R))\simeq D(R)^{\geqslant 0,\nu_0'}/(D(R)^{\geqslant 0,\nu_0'} \cap D(R)D(R)^{>0,\nu_0'})$ since $\xi_R-\xi_{R^{\textrm{lw}}} \in D(R)^{>0,\nu_0'}$ because the action of $\nu_0'(\BC^\times)$ contracts $R$ to $R^{\textrm{lw}}=R^{\nu_0'(\BC^\times)}$. The claim follows. 
\epr

\begin{proof}[Proof of Proposition~\ref{main_prop_sec}] 
Combining Lemmas~\ref{main_morphism},~\ref{sheaf_main_morphism},~\ref{pass} and using the constructions therein we see that the isomorphism
\begin{equation*}
\kappa^\theta\colon \mathsf{C}_{\nu'_0}(\CA^\theta_{\frac{m}{n}}(n,r))|_{\mathfrak{M}^\theta(n,r)} \simeq \CA^\theta_{\frac{m}{n}}(n,1)
\end{equation*}
is induced by the natural embedding $D(R^{\textrm{lw}}) \hookrightarrow D(R)$.
Now the desired statement about the intertwining property of $\kappa\colon \mathsf{C}_{\nu_0}(\CA_{\frac{m}{n}}(n,r)) \twoheadrightarrow \CA_{\frac{m}{n}}(n,1)$ follows from the fact that the isomorphism $\mathsf{C}_{\nu_0'}(D(R)) \simeq D(R^{\textrm{lw}})$ intertwines the actions in~(\ref{inter_D}). 
\end{proof}

\section{Character of $L_{\frac{m}{n},r}(n_0\la)=L_{\nu}(\varnothing,\ldots,\varnothing,n_{0}\lambda)$}\label{char_min_supp}	
Our construction of $L_{\frac{m}{n},r}(n_0\la)$ allows us to compute its character. Recall that we have set $m_{0} := m/\gcd(m,n)$ and $n_{0}:=n/\gcd(m,n)$. 
\ssec{}{Characters of minimally supported modules over $H_{\frac{n}{m}}(m,1)$}
Recall that $S_{\frac{n}{m}}(m_0\la)$ is the irreducible highest weight module over $H_{\frac{n}{m}}(m,1)$ with highest weight $m_0\la$.
The character of $S_{\frac{n}{m}}(m_0\la)$ was computed in~\cite[Theorem~1.4]{egl}. Let us recall the answer.
Let $\La$ be the ring of symmetric functions on infinitely many variables $z_1,z_2,\ldots$. For a partition $\beta$ of $m$ we define a constant $c^\beta_{\la,m_0}$ by
		 \begin{equation*}
	    s_{\la}(z_1^{m_0},z_2^{m_0},\ldots)=\sum_\beta c^\beta_{\la,m_0}s_\beta(z_1,z_2,\ldots),
		 \end{equation*}
		 where $s_{\la},\, s_\beta \in \La$ are the corresponding Schur polynomials. 	
\prop{}\label{S_via_stand}		 
The class $[S_{\frac{n}{m}}(m_0\la)] \in K_0(\CO(H_{\frac{n}{m}}(m,1)))$ is given by the formula
\begin{equation*}
[S_{\frac{n}{m}}(m_0\la)]=\sum_{\beta\vdash m}c^{\beta}_{\la,m_0}[\Delta_{\frac{n}{m}}(\beta)],
\end{equation*}
where $\Delta_{\frac{n}{m}}(\beta)$ is the standard object with highest weight $\beta$ in the category $\CO(H_{\frac{n}{m}}(m,1))$.
\eprop
\begin{proof}
The module $\Delta_{\frac{n}{m}}(\beta)$ is the graded dual of the co-standard module ${\nabla_{\frac{n}{m}}(\beta) \in \mathcal{O}(H_{\frac{n}{m}}(m, 1), \mathfrak{h})}$ of modules with locally nilpotent action of $\mathfrak{h}$. In the category $\mathcal{O}$ for $H_{\frac{n}{m}}(m, 1)$, the classes in $K_0$ of standard and co-standard modules coincide. Since taking the graded dual preserves the labels in category $\mathcal{O}$, the result now follows from \cite[Theorem 1.4]{egl}.
\end{proof}

Recall that for a finite dimensional representation $V$ of $S_m$ its Frobenius character is
\begin{equation}\label{frob_char}
\on{ch}_{S_m}V:=\frac{1}{m!}\sum_{\sigma \in S_m}\on{Tr}_V(\sigma)p_1^{k_1(\sigma)}\ldots p_l^{k_l(\sigma)} \in \La,
\end{equation}
here $p_i \in \La$ are power sums, $k_i(\sigma)$ is the number of cycles of length $i$ in $\sigma$, and $\La$ is the algebra of symmetric functions. 
For a partition $\beta$ of $m$
the Frobenius character of the irreducible representation $V_\beta$ is given by the Schur polynomial $s_\beta \in \La$. We will use plethystic notation, so that $f\left[\frac{X}{1 - q}\right]$ denotes the image of $f \in \La$ under the automorphism that sends power sums $p_k$ to $p_{k}\left[\frac{X}{1 - q}\right] = \frac{p_{k}}{1 - q^{k}}$.

\lem{}\label{S_m_char_stand}
For a partition $\beta$ of $m$ we have
\begin{equation*}
\on{ch}_{q,S_m}(\Delta_{\frac{n}{m}}(\beta))=(1-q^{-1})q^{-\frac{m-1}{2}+\frac{n}{m}\kappa(\beta)}s_\beta\left[\frac{X}{1-q^{-1}}\right],
\end{equation*}
where $\kappa(\beta)$ is the sum of contents of all boxes of $\beta$.
\elem
\prf
It follows from~\cite{beg2} that the highest weight component of $\Delta_{\frac{n}{m}}(\beta)$ has weight $q^{\frac{n}{m}\kappa(\beta)-\frac{m-1}{2}}$.
The module $\Delta_{\frac{n}{m}}(\beta)$ is isomorphic to $V_\beta \otimes \BC[\mathfrak{h}]$ as $S_m \times \BC^\times$-module and the $\BC^\times$-action corresponds to the shifted standard negative grading $\BC[\mathfrak{h}]=\bigoplus_{k \geqslant 0}S^k(\mathfrak{h}^*)$, $\on{deg}(S^k(\mathfrak{h}^*))=-k-\frac{m-1}{2}+\frac{n}{m}\kappa(\beta)$.
Consider now a permutation $\sigma \in S_m$. It is clear that 
$\on{det}_{\mathfrak{h}}(1-q^{-1}\sigma)=\frac{1}{1-q^{-1}}\prod_i (1-q^{-i})^{k_i(\sigma)}$. Note also that 
$$\on{Tr}_{V_\nu \otimes \BC[\mathfrak{h}]}(\sigma q^{h})=\frac{\on{Tr}_{V_\nu}(\sigma)}{\on{det}_{\mathfrak{h}}(1-q^{-1}\sigma)}=(1-q^{-1})\frac{\on{Tr}_{V_\beta}(\sigma)}{\prod_i (1-q^{-i})^{k_i(\sigma)}}.$$

\noindent We conclude that 
\begin{multline*}
\on{ch}_{q,S_m}(\Delta_{\frac{n}{m}}(\beta))=\frac{1}{m!}\sum_{\sigma \in S_m}(1-q^{-1})q^{\frac{n}{m}\kappa(\beta)-\frac{m-1}{2}}\frac{\on{Tr}_{V_\beta}(\sigma)\prod_i p_i^{k_i(\sigma)}}{\prod_i (1-q^{-i})^{k_i(\sigma)}}=\\
=(1-q^{-1})q^{\frac{n}{m}\kappa(\beta)-\frac{m-1}{2}}s_{\beta}\left[\frac{X}{1 - q^{-1}}\right].
\end{multline*}
\epr

\cor{}\label{char_S}
The $q$-graded $S_m$-character of $S_{\frac{n}{m}}(m_0\la)$ is given by 
\begin{equation*}
\on{ch}_{q,S_m}(S_{\frac{n}{m}}(m_0\la))=(1-q^{-1})\sum_{\beta\vdash m}c^{\beta}_{\la,m_0}q^{-\frac{m-1}{2}+\frac{n}{m}\kappa(\beta)}s_\beta\left[\frac{X}{1-q^{-1}}\right].
\end{equation*}
\ecor
\prf
Follows from Proposition~\ref{S_via_stand} and Lemma~\ref{S_m_char_stand}.
\epr

\sssec{}{Computation of the character of $L_{\frac{m}{n},r}(n_0\la)$}
Let us now finally compute the $q$-graded $\on{GL}_r$ character of the module $L_{\frac{m}{n},r}(n_0\la)$. Recall that we have a $q$-graded $\on{GL}_r$-equivariant isomorphism 
\begin{equation}\label{main_iso}
L_{\frac{m}{n},r}(n_0\la) \iso (S_{\frac{n}{m}}(m_0\la) \otimes (\BC^{r*})^{\otimes m})^{S_m}.   
\end{equation}
\prop{}
We have 
\begin{multline*}
\on{ch}_{q,\on{GL}_r}(L_{\frac{m}{n},r}(n_0\la))=\\=
(1-q^{-1})\sum\limits_{\substack{\on{r}(\mu)\leqslant \on{min}(n,r)\\\mu,\beta\vdash m}}c^{\beta}_{\la,m_0}q^{-\frac{m-1}{2}+\frac{n}{m}\kappa(\beta)}\langle s_\beta\left[\frac{X}{1-q^{-1}}\right],s_{\mu}\rangle [W_r(\mu)^{*}],
\end{multline*}
where $\langle\,,\,\rangle$ is the Hall inner product on $\La$, i.e.  the inner product with respect to which $\langle s_\al,s_\gamma \rangle=\delta_{\al\gamma}$ for any two partitions $\al,\,\gamma$.
\eprop
\prf
By Corollary~\ref{char_S} we have 
\begin{equation*}
\on{ch}_{q,S_m}(S_{\frac{n}{m}}(m_0\la^t))=(1-q^{-1})\sum_{\beta,|\beta|=m}c^{\beta}_{\la,m_0}q^{-\frac{m-1}{2}+\frac{n}{m}\kappa(\beta)}s_\beta\left[\frac{X}{1-q^{-1}}\right].
\end{equation*}
By Schur-Weyl duality we have
\begin{equation*}
\on{ch}_{S_m \times \on{GL}_r}((\BC^{r*})^{\otimes m})=\sum\limits_{\substack{\on{r}(\beta)\leqslant \on{min}(n,r)\\|\beta|=m}}s_\beta [W_r(\beta)^{*}].
\end{equation*}
So from~(\ref{main_iso}) we obtain the desired equality.  
\epr

\end{document}